\newtheorem{theorem}{Theorem}[section]
\newtheorem{corollary}[theorem]{Corollary}
\newtheorem{lemma}[theorem]{Lemma}
\newtheorem{proposition}[theorem]{Proposition}
\newtheorem{remark}{Remark}[section]
\newtheorem{definition}{Definition}[section]
\begin{document}
\title[Computing normal bundle]{An algorithm for the normal bundle of rational monomial curves }
\author{Alberto Alzati}
\address{Dipartimento di Matematica Univ. di Milano\\
via C.\ Saldini 50 20133-Milano (Italy)}
\email{alberto.alzati@unimi.it}
\author{Riccardo Re}
\address{Dipartimento di Matematica Univ. di Catania\\
viale A. Doria 6 95125-Catania (Italy)}
\email{riccardo@dmi.unict.it}
\author{Alfonso Tortora}
\address{Dipartimento di Matematica Univ. di Milano\\
via C.\ Saldini 50 20133-Milano (Italy)}
\email{alfonso.tortora@unimi.it}
\thanks{This work is within the framework of the national research project
''Geometry on Algebraic Varieties'' Cofin 2010 of MIUR.}
\date{December 18th 2015}
\subjclass[2010]{Primary 14C05; Secondary 14H45, 14N05}
\keywords{Rational curve, normal bundle, algorithm}
\maketitle

\begin{abstract}
In this short note we give an algorithm for calculating the splitting type
of the normal bundle of any rational monomial curve. The algorithm is achieved
by reducing the calculus to a combinatorial problem and then by solving it.
\end{abstract}

\section{Introduction}

It is well known that, up to projective trasformations, any degree $d$
rational curve $C$ in $\Bbb{P}^{s}(\Bbb{C})$ ($d>s\geq 3$), which we will
assume smooth or having at most ordinary singularities, is a suitable
projection of the rational normal curve $\Gamma _{d}$ of degree $d$ in $\Bbb{%
\ P}^{d}(\Bbb{C})$ from a projective linear space $L$ of dimension $d-s-1.$
Moreover the normal bundle $\mathcal{N}_{C}$ splits as a direct sum of line
bundles $\mathcal{O}_{\Bbb{P}^{2}}(\xi _{1})\oplus \mathcal{O}_{\Bbb{P}
^{2}}(\xi _{2})\oplus ...\oplus \mathcal{O}_{\Bbb{P}^{2}}(\xi _{s-1})$ where 
$\xi _{i}$ are suitable integers. In principle, one should calculate these
integers for any chosen $L.$

In \cite{ar2} the authors develop a general method to do this calculation.
This method was previously used in \cite{ar1} to get the splitting type of
the restricted tangent bundle of $C.$ However, while for the tangent bundle
it is possible to get an easy formula (see Theorem 3 of \cite{ar1}), for the
normal bundle it is not possible to obtain the integers $\xi _{i}$ without
performing an often cumbersome calculation of the dimension of the kernel of
a linear map (see Theorem 1 of \cite{ar2}). Of course, for a fixed curve,
one can do the calculation with the help of a computer and moreover the
method allows to prove some results about the Hilbert schemes of rational
curves having a given splitting type (see \S 6 of \cite{ar2}), but a direct
formula would be very useful.

We believe that, in general, it is very hard to accomplish this task with
the above method, however it is possible to solve the problem for a special
type of rational curves: the monomial ones, i. e. when the morphism $f:\Bbb{P%
}^{1}(\Bbb{C})\rightarrow \Bbb{P}^{s}(\Bbb{C})$ is given by monomials of the
same degree in two variables. This type of rational curves is very
investigated in the literature. In this paper we will give a formula for
calculating the integers $\xi _{i}$ when $C$ is a monomial curve (see
Theorem \ref{teocalcolo}).

In \S 2 we fix notations and we recall some results from \cite{ar1} and \cite
{ar2}. In \S 3 we address the case of monomial curves. In \S 4 we prove that
the calculation reduces to a combinatorial problem. In \S 5 we will give a
formula to solve the problem.

\section{Notation and Background material}

For us, a rational curve $C\subset \Bbb{P}^{s}(\Bbb{C})$ will be the target
of a morphism $f:\Bbb{P}^{1}(\Bbb{C})\rightarrow \Bbb{P}^{s}(\Bbb{C})$. We
will work always over $\Bbb{C}$. We will always assume that $C$ is not
contained in any hyperplane and that it has at most ordinary singularities.
Let us put $d:=\deg (C)>s\geq 3.$ Let $\mathcal{I}_{C}$ be the ideal sheaf
of $C,$ then $\mathcal{N}_{C}:=\mathit{Hom}_{\mathcal{O}_{C}}(\mathcal{I}
_{C}/\mathcal{I}_{C}^{2},\mathcal{O}_{C})$ as usual and, taking the
differential of $f,$ we get:

\begin{center}
$0\rightarrow \mathcal{T}_{\Bbb{P}^{1}}\rightarrow f^{*}\mathcal{T}_{\Bbb{P}
^{s}}\rightarrow f^{*}\mathcal{N}_{C}\rightarrow 0$
\end{center}

\noindent where $\mathcal{T}$ denotes the tangent bundle. Of course we can
always write:

\begin{center}
$\mathcal{T}_{f}:=f^{*}\mathcal{T}_{\Bbb{P}^{s}}=\bigoplus\limits_{i=1}^{r}%
\mathcal{O}_{\Bbb{P}^{1}}(b_{i}+d+2)\oplus \mathcal{O}_{\Bbb{P}^{1}}^{\oplus
(s-r)}(d+1)$

$\mathcal{N}_{f}:=f^{*}\mathcal{N}_{C}=\bigoplus\limits_{i=1}^{s-1}\mathcal{O%
}_{\Bbb{P}^{1}}(c_{i}+d+2)$
\end{center}

\noindent for suitable integers $b_{i}\geq 0$ (see $(14)$ of \cite{ar1}) and 
$c_{i}\geq 0$ (see Proposition 10 of \cite{ar2} where we assumed $c_{1}\geq
...\geq c_{s-1}).$

Every curve $C$ is, up to a projective transformation, the projection in $%
\Bbb{P}^{s}$ of a $d$-Veronese embedding $\Gamma _{d}$ of $\Bbb{P}^{1}$ in $%
\Bbb{P}^{d}:=\Bbb{P}(V)$ from a $(d-s-1)$-dimensional projective space $L:=%
\Bbb{P}(T)$ where $V$ and $T$ are vector spaces of dimension, respectively, $%
d+1$ and $e+1:=d-s.$ For any vector $0\neq v\in V$ let $[v]$ be the
corresponding point in $\Bbb{P}(V).$ Of course we require that $L\cap \Gamma
_{d}=\emptyset $ as we want that $f$ is a morphism.

Let us denote by $U=\left\langle x,y\right\rangle $ a fixed $2$-dimensional
vector space such that $\Bbb{P}^{1}=\Bbb{P}(U),$ then we can identify $V$
with $S^{d}U$ ($d$-th symmetric power) in such a way that the rational
normal degree $d$ curve $\Gamma _{d}$ can be considered as the set of pure
tensors of degree $d$ in $\Bbb{P}(S^{d}U)$ and the $d$-Veronese embedding is
the map

\begin{center}
$\alpha x+\beta y\rightarrow (\alpha x+\beta y)^{d}\qquad (\alpha :\beta
)\in \Bbb{P}^{1}.$
\end{center}

Fron now on, any degree $d$ rational curve $C,$ will be determined (up to
projective equivalences which are not important in our context) by the
choice of a proper subspace $T\subset S^{d}U$ such that $\Bbb{P}(T)\cap
\Gamma _{d}=\emptyset .$

By arguing in this way, the elements of a base of $T$ can be thought as
homogeneous, degree $d,$ polynomials in $x,y.$ In \cite{ar1} and \cite{ar2}
the authors relate the polynomials of any base of $T$ with the splitting
type of $\mathcal{T}_{f}$ and $\mathcal{N}_{f}.$ To describe this relation
we need some additional definitions.

Let us indicate by $\left\langle \partial _{x},\partial _{y}\right\rangle $
the dual space $U^{*}$of $U,$ where $\partial _{x}$ and $\partial _{y}$
indicate the partial derivatives with respect to $x$ and $y.$

\begin{definition}
\label{defdelta} Let $T$ be any proper subspace of $S^{d}U.$ Then:

$\partial T:=\left\langle \omega (T)|\omega \in U^{*}\right\rangle $

$\partial ^{-1}T:=\bigcap\limits_{\omega \in U^{*}}\omega ^{-1}T$

$r(T):=\dim (\partial T)-\dim (T).$
\end{definition}

Note that Definition \ref{defdelta} allows to define also $\partial ^{k}T$
and $\partial ^{-k}T$ for any integer $k\geq 1,$ by induction. Moreover we
can set $\partial ^{0}T:=T.$ Let us recall the following:

\begin{theorem}
\label{teotipo} Let $T\subset S^{d}U$ be any proper subspace as above such
that $\Bbb{P}(T)\cap \Gamma _{d}=\emptyset .$ Then $r(T)\geq 1$ and there
exist $r$ polynomials $p_{1},...,p_{r}$ of degree $d+b_{1},...,d+b_{r}$
respectively, with $b_{i}\geq 0$ and $[p_{i}]\in \Bbb{P}^{d+b_{i}}\backslash
Sec^{b_{i}}(\Gamma _{d+b_{i}})$ for $i=1,...,r,$ such that:

$T=\partial ^{b_{1}}(p_{1})\oplus \partial ^{b_{2}}(p_{2})\oplus ...\oplus
\partial ^{b_{r}}(p_{r})$ and

$\partial T=\partial ^{b_{1}+1}(p_{1})\oplus \partial
^{b_{2}+1}(p_{2})\oplus ...\oplus \partial ^{b_{r}+1}(p_{r}).$
\end{theorem}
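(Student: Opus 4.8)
The plan is to study the pencil of differentiation operators $s\partial_x+t\partial_y$ restricted to $T$ and to bring it into Kronecker normal form; the cyclic summands $\partial^{b_i}(p_i)$ will turn out to be exactly the singular blocks of this pencil.

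First I would prove $r(T)\geq 1$. For $0\neq\omega\in U^*$ the kernel of $\omega\colon S^dU\to S^{d-1}U$ is one-dimensional, spanned by $\ell_\omega^d$, where $\ell_\omega\in U$ is the linear form with $\langle\omega,\ell_\omega\rangle=0$ (indeed $\partial_\omega\ell^d=d\,\ell^{d-1}\langle\omega,\ell\rangle$); thus $\ker\omega\cap S^dU$ is a point of $\Gamma_d$. Since $\p(T)\cap\Gamma_d=\emptyset$ we get $\ell_\omega^d\notin T$, so every nonzero member of the pencil is injective on $T$; in particular $\dim\partial T\geq\dim\partial_xT=\dim T$, i.e. $r(T)\geq 0$. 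If $r(T)=0$ then $\partial_xT=\partial T\supseteq\partial_yT$, so $\phi:=(\partial_x|_T)^{-1}\partial_y$ is a well-defined endomorphism of $T$; an eigenvector $q$ of $\phi$ with eigenvalue $\lambda$ satisfies $(\partial_y-\lambda\partial_x)q=0$, and the kernel computation above shows $q$ is a $d$-th power, contradicting $\p(T)\cap\Gamma_d=\emptyset$. Hence $r(T)\geq 1$.

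For the decomposition, I would regard $s\partial_x+t\partial_y$ as a pencil of linear maps $T\to\partial T$. By the injectivity just established, this pencil has full column rank for every $(s:t)$, so its Kronecker canonical form can contain neither regular (Jordan) blocks nor right singular blocks $L_\epsilon$, as each of those would drop rank at some $(s:t)$; therefore the pencil is a direct sum of left singular blocks $L_{\eta_1}^{T},\dots,L_{\eta_r}^{T}$. Counting dimensions, the number of blocks is $\dim\partial T-\dim T=r(T)=r$, and the canonical form furnishes a $\partial$-compatible direct-sum decomposition of both $T$ and $\partial T$. A block $L_{\eta_i}^{T}$ provides vectors $v_1,\dots,v_{\eta_i}\in T$ obeying the ladder relations $\partial_yv_j=\partial_xv_{j+1}$.

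The main obstacle is the final translation of each abstract block back into the stated cyclic form. The ladder relations $\partial_yv_j=\partial_xv_{j+1}$ are precisely the integrability conditions for a single potential, and I would show they can be integrated to a form $p_i\in S^{d+b_i}U$ with $b_i=\eta_i-1$ whose $b_i$-th order partials span the given block, so that the block equals $\partial^{b_i}(p_i)$ and its image equals $\partial^{b_i+1}(p_i)$; this integration step, which rests on the exactness of differentiation of binary forms in the relevant degree range, is the delicate point. I would then identify the non-degeneracy of the block with the secant condition: the $b_i+1$ partials $\partial_x^{b_i}p_i,\dots,\partial_y^{b_i}p_i$ are independent exactly when the catalecticant $S^{b_i}U^{*}\to S^dU$ is injective, which is equivalent to $[p_i]\notin Sec^{b_i}(\Gamma_{d+b_i})$. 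Finally one checks that each summand contributes exactly $1$ to $r(T)$, i.e. $\dim\partial^{b_i+1}(p_i)=\dim\partial^{b_i}(p_i)+1$, using that $d$ is large relative to the $b_i$; summing the blocks then yields $T=\bigoplus_i\partial^{b_i}(p_i)$ and $\partial T=\bigoplus_i\partial^{b_i+1}(p_i)$.
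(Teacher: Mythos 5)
Your proposal is correct in outline but takes a genuinely different route from the paper: the paper does not actually prove this statement, it simply invokes Theorem 1 of \cite{ar1}, noting that the hypothesis $\Bbb{P}(T)\cap \Gamma _{d}=\emptyset $ forces $S_{T}=0$ in the notation of that reference. Your argument is self-contained and rests on the Kronecker--Weierstrass classification of the pencil $s\partial _{x}+t\partial _{y}\colon T\rightarrow \partial T$. The key observation that $\ker (\omega |_{S^{d}U})$ is the single point $[\ell _{\omega }^{d}]\in \Gamma _{d}$, so that every nonzero member of the pencil is injective on $T$, correctly excludes the regular and right-singular blocks (and your eigenvector argument for $r(T)\geq 1$ is a clean by-product); since $\partial T$ is spanned by $\partial _{x}T+\partial _{y}T$ there are also no $1\times 0$ blocks, so only left-singular blocks survive and their number is $\dim \partial T-\dim T=r(T)$. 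Two points would need care in a full write-up. First, the integration of the ladder relations $\partial _{y}v_{j}=\partial _{x}v_{j+1}$ to a single form $p_{i}$ is the real content and is only sketched; it is standard (Euler's identity gives the primitive explicitly, and one iterates), but it must be accompanied by the verification that the resulting partial derivatives are linearly independent, i.e.\ that each block attains the maximal ranks claimed. Second, there is an off-by-one in your secant identification: injectivity of the catalecticant $S^{b_{i}}U^{*}\rightarrow S^{d}U$ is equivalent to $[p_{i}]\notin Sec^{b_{i}-1}(\Gamma _{d+b_{i}})$, whereas the condition $[p_{i}]\notin Sec^{b_{i}}(\Gamma _{d+b_{i}})$ appearing in the statement is the stronger one, equivalent to $\dim \partial ^{b_{i}+1}(p_{i})=b_{i}+2$; this is exactly the condition you verify in your final sentence, so the slip is harmless but should be fixed. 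What your approach buys is a transparent linear-algebraic proof independent of the machinery of \cite{ar1}; what the paper's citation buys is brevity and consistency with the notation used in the rest of the argument.
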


\begin{proof}
It follows from Theorem 1 of \cite{ar1}, because from our assumptions $%
S_{T}=0$ in the notation of \cite{ar1}. Recall that $Sec^{b}(\Gamma _{d+b})$
is the variety generated by sets of $b+1$ distinct points of $\Gamma _{d+b}.$
\end{proof}

From the above decomposition of $T$ it is possible to get directly the
splitting type of $\mathcal{T}_{f}$ depending by the integers $b_{i},$ (see
Theorem 3 of \cite{ar1}) however here we are interested in the splitting
type of $\mathcal{N}_{f}.$ To this aim the following Proposition is useful:

\begin{proposition}
\label{propdelta2} In the above notations, for any integer $k\geq 0,$ let us
call $\varphi (k):=h^{0}(\Bbb{P}^{1},\mathcal{N}_{f}(-d-2-k)).$ Then the
splitting type of $\mathcal{N}_{f}$ is completely determined by $\Delta
^{2}[\varphi (k)]:=\varphi (k+2)-2\varphi (k+1)+\varphi (k).$
\end{proposition}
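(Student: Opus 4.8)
The plan is to show that the function $\varphi(k)$ is eventually determined by the splitting type and conversely, by expressing $\varphi(k)$ explicitly in terms of the integers $c_i$ via the known cohomology of line bundles on $\p^1$. Since $\mathcal{N}_f = \bigoplus_{i=1}^{s-1}\O_{\p^1}(c_i+d+2)$, we have $\mathcal{N}_f(-d-2-k) = \bigoplus_{i=1}^{s-1}\O_{\p^1}(c_i-k)$, and therefore
\begin{equation*}
\varphi(k) = \sum_{i=1}^{s-1} h^0(\p^1, \O_{\p^1}(c_i-k)) = \sum_{i=1}^{s-1} \max(c_i-k+1,\, 0).
\end{equation*}
First I would record this formula, which is immediate from the additivity of $h^0$ over direct sums and the standard fact that $h^0(\p^1,\O_{\p^1}(m)) = m+1$ for $m\geq 0$ and $0$ otherwise.

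The key observation is that each summand $g_i(k) := \max(c_i-k+1,0)$ is a piecewise-linear function of $k$: it equals $c_i-k+1$ (slope $-1$) for $k \le c_i$ and is identically $0$ for $k \ge c_i+1$. Consequently $\varphi$ is a sum of such ``ramp'' functions, and its second difference localizes the break points. Concretely, for a single ramp one computes that $\Delta^2[g_i(k)] = g_i(k+2) - 2g_i(k+1) + g_i(k)$ vanishes except at the single value $k = c_i - 1$, where it equals $1$. Summing over $i$ gives
\begin{equation*}
\Delta^2[\varphi(k)] = \#\{\, i : c_i = k+1 \,\},
\end{equation*}
so that $\Delta^2[\varphi(k)]$ counts exactly how many of the $c_i$ equal $k+1$. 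I would verify the single-ramp computation by checking the three relevant regimes of $k$ (well below, at, and well above $c_i-1$), taking care at the boundary of the $\max$.

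From here the conclusion is essentially a bookkeeping step: knowing the value of $\Delta^2[\varphi(k)]$ for every $k\ge 0$ tells us, for each integer value $j=k+1\ge 1$, the multiplicity with which $j$ occurs among the $c_i$. Since all $c_i \ge 0$ and we have ordered $c_1 \ge \dots \ge c_{s-1}$, reading off these multiplicities for $j \ge 1$ recovers all the strictly positive $c_i$ together with their multiplicities; the remaining summands, accounting for the total count $s-1$, must be the $c_i$ equal to $0$. Thus the full unordered list of the $c_i$ — hence the splitting type of $\mathcal{N}_f$ — is completely determined by the function $k\mapsto \Delta^2[\varphi(k)]$, which is the claim.

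The only mildly delicate point, and the place I would be most careful, is the behaviour at the endpoints of the $\max$ in the single-ramp second-difference computation: one must confirm that the nonlinearity contributes the value $1$ at precisely $k=c_i-1$ and nothing at neighbouring values, rather than spreading the contribution across two adjacent $k$. Everything else is linear and cancels in the second difference. I do not foresee a serious obstacle beyond this careful case check.
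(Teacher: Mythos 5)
Your overall strategy is exactly the paper's (the paper's proof is a one-line assertion of the same counting fact: $\varphi(k)=\sum_i h^0(\mathcal{O}_{\mathbb{P}^1}(c_i-k))$ is a sum of ramps and $\Delta^2$ localizes the breaks), but your single-ramp computation contains an off-by-one error at precisely the point you flag as delicate. With $g_i(k)=\max(c_i-k+1,0)$ one has $g_i(c_i-1)=2$, $g_i(c_i)=1$, $g_i(c_i+1)=0$, $g_i(c_i+2)=0$, so $\Delta^2[g_i](c_i-1)=0-2+2=0$ while $\Delta^2[g_i](c_i)=0-0+1=1$; the second difference is supported at $k=c_i$, not at $k=c_i-1$. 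Hence the correct identity is $\Delta^2[\varphi(k)]=\#\{i: c_i=k\}$, which is what the paper states and uses later (e.g.\ Theorem \ref{teokern} reads off $\#\{i:c_i=0\}$ as $\Delta^2[\varphi(0)]$, which would be wrong under your formula).

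The headline conclusion of the proposition survives your error, since even the shifted count for $j=k+1\geq 1$ would recover the positive $c_i$ and the zeros by subtraction from $s-1$; but the quantitative formula is the actual content that gets used downstream, so as written the proof establishes the wrong identity and the case check you defer to the end must be redone. With the corrected support point the bookkeeping simplifies: $\Delta^2[\varphi(k)]$ for $k\geq 0$ directly gives the multiplicity of every value of $c_i$, including $0$, with no need for the subtraction argument.
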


\begin{proof}
We know that $\mathcal{N}_{f}(-d-2)$ $=\bigoplus\limits_{i=1}^{s-1}\mathcal{O%
}_{\Bbb{P}^{1}}(c_{i}),$ so that we have only to determine the integers $%
c_{i}.$ By definition, $\Delta ^{2}[\varphi (k)]$ is exactly the number of
integers $c_{i}$ which are equal to $k.$
\end{proof}

From Proposition \ref{propdelta2} it follows that to know the splitting type
of $\mathcal{N}_{f}$ it suffices to know $\varphi (k)$ for any $k\geq 0.$

Let us consider the linear operators $D_{k}:S^{k}U\otimes S^{d}U\rightarrow
S^{k-1}U\otimes S^{d-1}U,$ such that $D_{k}:=\partial _{x}\otimes \partial
_{y}-\partial _{y}\otimes \partial _{x},$ and $D_{k}^{2}:S^{k}U\otimes
S^{d}U\rightarrow S^{k-2}U\otimes S^{d-2}U.$ Of course, as $T\subset S^{d}U,$
we can restrict $D_{k}^{2}$ to $S^{k}U\otimes T$ and we get a linear map $%
D_{k|S^{k}U\otimes T}^{2}:S^{k}U\otimes T\rightarrow S^{k-2}U\otimes
\partial ^{2}T;$ let us define:

\begin{center}
$T_{k}$ $:=$ $\ker (D_{k|S^{k}U\otimes T}^{2})$.
\end{center}

Then we have the following:

\begin{theorem}
\label{teokern} In the above notations:

$\varphi (0)=d+e$

$\varphi (1)=2(e+1)$

$\varphi (2)=3(e+1)-\dim (\partial ^{2}T)$

and for any $k\geq 2,$ $\varphi (k)=\dim (T_{k}).$

Moreover the number of integers $c_{i}$ such that $c_{i}=0$ is $d-1-\dim
(\partial ^{2}T).$
\end{theorem}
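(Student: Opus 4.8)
The plan is to compute each quantity $\varphi(k) = h^0(\mathbb{P}^1, \mathcal{N}_f(-d-2-k))$ by relating it back to the exact sequence defining the normal bundle and the concrete description of $T$ coming from Theorem \ref{teotipo}. Since $\mathcal{N}_f(-d-2) = \bigoplus_{i=1}^{s-1} \mathcal{O}_{\mathbb{P}^1}(c_i)$ with all $c_i \geq 0$, I would first note that $h^0(\mathcal{O}_{\mathbb{P}^1}(c_i - k)) = \max(c_i-k+1, 0)$, so the small values $k=0,1,2$ can in principle be read off once the relation between the $c_i$ and the subspace $T$ is made explicit, while the general formula $\varphi(k) = \dim(T_k)$ requires identifying the space of global sections with the kernel $T_k$ of the iterated operator $D^2_{k\,|\,S^kU \otimes T}$.

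The key computational engine is the twisted differential sequence. Twisting
\[
0 \rightarrow \mathcal{T}_{\mathbb{P}^1} \rightarrow f^*\mathcal{T}_{\mathbb{P}^s} \rightarrow \mathcal{N}_f \rightarrow 0
\]
by $\mathcal{O}_{\mathbb{P}^1}(-d-2-k)$ and taking cohomology, I would express $\varphi(k)$ through the global sections of the twisted $\mathcal{T}_f = f^*\mathcal{T}_{\mathbb{P}^s}$ and $\mathcal{T}_{\mathbb{P}^1} = \mathcal{O}_{\mathbb{P}^1}(2)$, using the explicit splitting types recorded in \S 2. The values $\varphi(0) = d+e$ and $\varphi(1) = 2(e+1)$ should follow from a direct dimension count: for $k=0,1$ the twist is mild enough that all summands $\mathcal{O}_{\mathbb{P}^1}(c_i-k)$ retain the expected number of sections and the Euler characteristic matches $h^0$ with no $h^1$ correction, so one computes $\sum_i (c_i+1) = \chi$-type sums against the known $\deg \mathcal{N}_f$. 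The value $\varphi(2) = 3(e+1) - \dim(\partial^2 T)$ is the crossover point where the operator $D^2$ first genuinely enters: I expect this to come from recognizing $S^2 U \otimes T \xrightarrow{D^2} \partial^2 T$ and computing $\dim T_2 = \dim(S^2 U \otimes T) - \mathrm{rank}(D^2_{|S^2U\otimes T})$, with the rank equal to $\dim(\partial^2 T)$ (i.e. $D^2$ surjects onto $S^0 U \otimes \partial^2 T = \partial^2 T$), giving $3(e+1) - \dim(\partial^2 T)$.

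For the general identity $\varphi(k) = \dim(T_k)$, the heart of the argument is to canonically identify $H^0(\mathbb{P}^1, \mathcal{N}_f(-d-2-k))$ with the kernel of $D^2$ restricted to $S^kU \otimes T$. I would build this by pulling back the normal bundle sequence and writing sections of $\mathcal{N}_f$ twists in terms of the ambient apolarity pairing: the operator $D_k = \partial_x \otimes \partial_y - \partial_y \otimes \partial_x$ is precisely the Wronskian-type contraction that measures failure of a tensor in $S^kU \otimes S^dU$ to descend to a section of the normal direction, and iterating to $D^2$ accounts for the second-order nature of $\mathcal{N}_f \cong \mathcal{I}_C/\mathcal{I}_C^2$ dualized. \textbf{The main obstacle} will be making this identification fully rigorous rather than merely dimensional: one must check that the isomorphism $H^0(\mathcal{N}_f(-d-2-k)) \cong T_k$ is induced by a genuine map of the right cohomological origin (so that it is valid for all $k \geq 2$ uniformly), and that the boundary contributions from $H^1(\mathcal{T}_{\mathbb{P}^1}(-d-2-k)) = H^1(\mathcal{O}(-d-k))$ are correctly bookkept in the long exact sequence. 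Finally, the count of indices with $c_i = 0$ should follow from Proposition \ref{propdelta2} applied at $k=0$: by that result $\Delta^2[\varphi(k)]$ at $k=0$ counts the $c_i$ equal to $0$, so I would compute $\varphi(2) - 2\varphi(1) + \varphi(0) = [3(e+1) - \dim(\partial^2 T)] - 4(e+1) + (d+e)$, and simplify using $e+1 = d-s$ to obtain $d - 1 - \dim(\partial^2 T)$.
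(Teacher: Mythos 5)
The checkable parts of your plan do work, and one of them coincides with the paper's own argument. The degree count $\deg\mathcal{N}_{f}=d(s+1)-2$ gives $\sum c_{i}=2(e+1)$, whence $\varphi (0)=\sum (c_{i}+1)=2(e+1)+(s-1)=d+e$ and $\varphi (1)=\sum c_{i}=2(e+1)$, with no $h^{1}$ correction precisely because all $c_{i}\geq 0$; the value $\varphi (2)=3(e+1)-\dim (\partial ^{2}T)$ does follow from $\varphi (2)=\dim (T_{2})$ once one checks that $D_{2}^{2}:S^{2}U\otimes T\rightarrow \partial ^{2}T$ is surjective (evaluate on $x^{2}\otimes t$, $xy\otimes t$, $y^{2}\otimes t$ to hit all second partials of $t$); and your computation $\Delta ^{2}[\varphi (0)]=[3(e+1)-\dim (\partial ^{2}T)]-4(e+1)+(d+e)=d-1-\dim (\partial ^{2}T)$ is word for word the paper's proof of the last assertion, via Proposition \ref{propdelta2}.

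The genuine gap is the central claim $\varphi (k)=\dim (T_{k})$ for $k\geq 2$, which is the entire substance of the theorem: every other item is an easy consequence of it or of the splitting data in \S\ 2. Your proposal replaces this with a heuristic (that $D_{k}$ is a Wronskian-type contraction measuring failure to descend to the normal direction) and then explicitly defers the construction of the isomorphism $H^{0}(\mathcal{N}_{f}(-d-2-k))\simeq T_{k}$ as ``the main obstacle''; no such construction is supplied, and it cannot be extracted from the long exact sequence of the twisted tangent--normal sequence alone, since for $k\geq 2$ the twists are negative enough that $h^{1}$ of the subbundle and of $\mathcal{T}_{f}$ intervene and the sequence only controls Euler characteristics, not the individual $h^{0}$ one needs. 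To be fair, the paper does not prove this identification either: its proof is a citation of Theorem 1 and Proposition 11 of \cite{ar2}, where the kernel description is derived from the realization of $C$ as a projection of the rational normal curve from $\Bbb{P}(T)$ together with the identification $\ker D_{k}=p_{k}(S^{d+k}U)$ that reappears here as Proposition \ref{propsuccessioni} $i)$. So your outline points at the right mechanism, but as written it leaves unproved exactly the one statement that carries the content of the theorem.
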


\begin{proof}
See Theorem 1 and Proposition 11 of \cite{ar2}; note that, for $k=2,$ there
are two different ways to get $\varphi (2).$

By Proposition \ref{propdelta2} the number of integers $c_{i}$ such that $%
c_{i}=0$ is $\Delta ^{2}[\varphi (0)]=d-1-\dim (\partial ^{2}T).$
\end{proof}

To calculate $\varphi (k)$ for \thinspace $k\geq 2,$ we can use the
following very useful:

\begin{proposition}
\label{propspezzo} Let us assume that there is a direct decomposition $%
T=T^{^{\prime }}\oplus T^{^{\prime \prime }}$ such that $\partial
^{2}T=\partial ^{2}T^{^{\prime }}\oplus \partial ^{2}T^{^{\prime \prime }}$
and let us define

$K^{^{\prime }}:=$ $\ker (D_{k}^{2}:S^{k}U\otimes T^{^{\prime }}\rightarrow
S^{k-2}U\otimes \partial ^{2}T^{^{\prime }})$

$K^{^{^{\prime \prime }}}:=$ $\ker (D_{k}^{2}:S^{k}U\otimes T^{^{\prime
\prime }}\rightarrow S^{k-2}U\otimes \partial ^{2}T^{^{^{\prime \prime }}}).$

Then, for any $k\geq 2,$ we have $\varphi (k)=\dim (K^{^{\prime }})+\dim
(K^{^{\prime \prime }}).$
\end{proposition}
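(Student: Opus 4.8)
The plan is to reduce everything to Theorem \ref{teokern}, which gives $\varphi(k)=\dim(T_k)$ for $k\geq 2$ with $T_k=\ker(D^2_{k|S^kU\otimes T})$; hence it suffices to prove the kernel decomposition $T_k=K'\oplus K''$. First I would split the source of the restricted map. Since $T=T'\oplus T''$, tensoring with $S^kU$ yields the direct sum $S^kU\otimes T=(S^kU\otimes T')\oplus(S^kU\otimes T'')$. This step is automatic and carries no geometric content; the substance lies entirely in controlling the target.

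The heart of the argument is to check that $D_k^2$ is block-diagonal with respect to this splitting, i.e.\ that it sends $S^kU\otimes T'$ into $S^{k-2}U\otimes\partial^2T'$ and $S^kU\otimes T''$ into $S^{k-2}U\otimes\partial^2T''$. To see this I would unwind the definition $D_k=\partial_x\otimes\partial_y-\partial_y\otimes\partial_x$ and observe that in each term the operators acting on the second tensor factor are precisely the partial derivatives $\partial_x,\partial_y\in U^*$. Therefore, on an element $P\otimes Q$ with $Q\in T'$, the composite $D^2_k$ produces a sum of terms whose second factor is a second-order partial derivative of $Q$; by the very definition of $\partial^2T'$ these derivatives all lie in $\partial^2T'$, and the restriction of $D^2_k$ to $S^kU\otimes T'$ is exactly the map whose kernel defines $K'$. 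The identical reasoning applies to $T''$.

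With the block-diagonal structure in hand, I would invoke the hypothesis $\partial^2T=\partial^2T'\oplus\partial^2T''$ to make the target decomposition direct. Writing any $\xi\in S^kU\otimes T$ as $\xi'+\xi''$ with $\xi'\in S^kU\otimes T'$ and $\xi''\in S^kU\otimes T''$, we have $D^2_k\xi=D^2_k\xi'+D^2_k\xi''$, where the two summands live in $S^{k-2}U\otimes\partial^2T'$ and $S^{k-2}U\otimes\partial^2T''$ respectively. Because this latter sum is direct, $D^2_k\xi=0$ forces both $D^2_k\xi'=0$ and $D^2_k\xi''=0$, so $\xi\in K'\oplus K''$. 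This gives $T_k=K'\oplus K''$, whence $\varphi(k)=\dim(K')+\dim(K'')$ as claimed.

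The only real obstacle is the \emph{directness of the target}: the source always splits, but without the hypothesis $\partial^2T=\partial^2T'\oplus\partial^2T''$ a nonzero vector in $\partial^2T'\cap\partial^2T''$ could allow $D^2_k\xi'$ and $D^2_k\xi''$ to cancel, so that $\xi$ lies in $T_k$ while neither piece lies in $K'$ or $K''$. This is exactly what the hypothesis excludes, and recognizing that this single compatibility condition is what drives the entire splitting is the key conceptual point of the proof.
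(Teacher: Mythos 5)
Your proof is correct. Note that the paper does not actually prove this proposition in the text: it simply cites Lemma 13 of \cite{ar2}, so there is no in-paper argument to compare against. Your self-contained argument is the natural one and supplies exactly what the citation hides: the source $S^kU\otimes T$ always splits as $(S^kU\otimes T')\oplus(S^kU\otimes T'')$, the map $D_k^2$ is block-diagonal because the second tensor factor of each term of $D_k=\partial_x\otimes\partial_y-\partial_y\otimes\partial_x$ is a first-order derivative (hence second derivatives of elements of $T'$ land in $\partial^2T'$ by definition), and the hypothesis $\partial^2T=\partial^2T'\oplus\partial^2T''$ is precisely what makes the target decomposition direct, so that $D_k^2\xi=0$ forces both components to vanish and $T_k=K'\oplus K''$. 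Combined with Theorem \ref{teokern}, which identifies $\varphi(k)$ with $\dim(T_k)$ for $k\geq 2$, this gives the claim; you also correctly isolate the directness of the target as the one point where the hypothesis is genuinely used.
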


\begin{proof}
See Lemma 13 of \cite{ar2}.
\end{proof}

Proposition \ref{propspezzo} allows to semplify the calculation of $\varphi
(k),$ however we need a method to evalute $\dim (K^{\prime })$ and $\dim
(K^{^{\prime \prime }})$ for any possible above decomposition. To this aim,
we introduce the following polarization (linear) maps $p_{k}:S^{d+k}U%
\rightarrow S^{k}U\otimes S^{d}U$ such that, for any polynomial $f\in
S^{d+k}U$

\begin{center}
$p_{k}(f)$ $=\frac{(\deg f-k)!}{\deg f!}\sum\limits_{i=0}^{k}\binom{k}{i}
x^{k-i}y^{i}\otimes \partial _{x}^{k-i}\partial _{y}^{i}(f).$
\end{center}

We have the following:

\begin{proposition}
\label{propsuccessioni} In the above notations, for any integer $k\geq 2,$
we have the following exact sequences of vector spaces:

$i)$ $0\rightarrow S^{d+k}U\rightarrow S^{k}U\otimes S^{d}U\rightarrow
S^{k-1}U\otimes S^{d-1}U\rightarrow 0$ \noindent where the first map is $%
p_{k}$ and the second map is $D_{k};$

$ii)$ $0\rightarrow p_{k}(\partial ^{-k}T)\rightarrow T_{k}\rightarrow
p_{k-1}(N_{k})\rightarrow 0$ where $N_{k}$ is a suitable subspace of $%
S^{d+k-2}U,$ the first map is an inclusion and the second map is the
restriction of $D_{k}$ to $T_{k};$

$iii)$ $0\rightarrow p_{k-1}(\partial ^{-k+1}\partial T)\rightarrow
S^{k-1}U\otimes \partial T\rightarrow D_{k-1}(S^{k-1}U\otimes \partial
T)\rightarrow 0$ where the first map is an inclusion and the second one is $%
D_{k-1}$;

$iv)$ $0\rightarrow p_{k-1}(N_{k})\rightarrow p_{k-1}(\partial
^{-k+1}\partial T)\rightarrow Q_{k}:=\frac{p_{k-1}(\partial ^{-k+1}\partial
T)}{p_{k-1}(N_{k})}\rightarrow 0$ where the first map is an inclusion.

Moreover we have the following commutative diagram where the left horizontal
maps are inclusions and the lower vertical maps are induced by $D_{k}$ and $%
D_{k-1}.$

\noindent $
\begin{array}{ccccccccc}
&  & 0 &  & 0 &  & 0 &  &  \\ 
&  & \downarrow &  & \downarrow &  & \downarrow &  &  \\ 
0 & \rightarrow & p_{k-1}(N_{k}) & \rightarrow & p_{k-1}(\partial
^{-k+1}\partial T) & \rightarrow & Q_{k} & \rightarrow & 0 \\ 
&  & \downarrow &  & \downarrow &  & \downarrow &  &  \\ 
0 & \rightarrow & D_{k}(S^{k}U\otimes T) & \rightarrow & S^{k-1}U\otimes
\partial T & \rightarrow & \frac{S^{k-1}U\otimes \partial T}{%
D_{k}(S^{k}U\otimes T)} & \rightarrow & 0 \\ 
&  & \downarrow &  & \downarrow &  & \downarrow &  &  \\ 
0 & \rightarrow & D_{k}^{2}(S^{k}U\otimes T) & \rightarrow & 
D_{k-1}(S^{k-1}U\otimes \partial T) & \rightarrow & \frac{%
D_{k-1}(S^{k-1}U\otimes \partial T)}{D_{k}^{2}(S^{k}U\otimes T)} & 
\rightarrow & 0 \\ 
&  & \downarrow &  & \downarrow &  & \downarrow &  &  \\ 
&  & 0 &  & 0 &  & 0 &  & 
\end{array}
$
\end{proposition}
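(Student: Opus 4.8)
The plan is to read all four of the displayed sequences as instances of the first isomorphism theorem, after sequence $i)$ has been used to identify the relevant kernels, and then to obtain the $3\times 3$ diagram from the nine lemma. Two computations would be carried out first, and everything else is bookkeeping built on them.

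The first is sequence $i)$ itself. I would check by direct manipulation of the polarization formula that $D_k\circ p_k=0$: substituting $p_k(f)=c\sum_i\binom{k}{i}x^{k-i}y^i\otimes\partial_x^{k-i}\partial_y^i f$ into $D_k=\partial_x\otimes\partial_y-\partial_y\otimes\partial_x$, the two resulting sums cancel term by term after the reindexing $i\mapsto i+1$, because $\binom{k}{i}(k-i)=\binom{k}{i+1}(i+1)$. Next, $p_k$ is injective: the coefficient of the basis vector $x^{k-i}y^i$ of $S^kU$ in $p_k(f)$ is a nonzero multiple of $\partial_x^{k-i}\partial_y^i f$, and for homogeneous $f$ of degree $d+k$ each output monomial has a unique preimage monomial, so the simultaneous vanishing of these partials forces $f=0$. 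Finally $D_k$ is surjective; I would get this either from the Clebsch--Gordan decomposition of $S^kU\otimes S^dU$ under $SL_2$, on which the equivariant map $D_k$ has kernel exactly the top (Cartan) summand $S^{d+k}U$, or from the dimension identity $(k+1)(d+1)=(d+k+1)+kd$ combined with the inclusion $\mathrm{im}\,p_k\subseteq\ker D_k$ already in hand. This yields $\ker D_k=\mathrm{im}\,p_k$ and hence $i)$.

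The second computation is a "component extraction" principle to be reused throughout: for any subspace $W\subseteq S^dU$ one has $p_k(f)\in S^kU\otimes W$ if and only if $\partial_x^{k-i}\partial_y^i f\in W$ for all $i$, that is, if and only if $f\in\partial^{-k}W$. Granting this, sequence $ii)$ is immediate. The kernel of $D_k$ on $S^kU\otimes T$ is $\ker D_k\cap(S^kU\otimes T)=p_k(S^{d+k}U)\cap(S^kU\otimes T)=p_k(\partial^{-k}T)$ by $i)$ and the extraction principle; it lies in $T_k$ since $\ker D_k\subseteq\ker D_k^2$, and it equals $\ker(D_k|_{T_k})$ because $D_k\tau=0$ already forces $D_k^2\tau=0$. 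Moreover $D_k(T_k)$ lands in $\ker D_{k-1}=p_{k-1}(S^{d+k-2}U)$, so I may define $N_k$ by $p_{k-1}(N_k):=D_k(T_k)$, using injectivity of $p_{k-1}$. The first isomorphism theorem for $D_k|_{T_k}$ then gives $ii)$. Sequence $iii)$ is the same argument applied to $D_{k-1}$ on $S^{k-1}U\otimes\partial T$, whose kernel is $p_{k-1}(\partial^{-k+1}\partial T)$. For $iv)$ I note that $D_k(S^kU\otimes T)\subseteq S^{k-1}U\otimes\partial T$, since the partials of $T$ lie in $\partial T$ by definition; hence $p_{k-1}(N_k)=D_k(T_k)\subseteq(S^{k-1}U\otimes\partial T)\cap\ker D_{k-1}=p_{k-1}(\partial^{-k+1}\partial T)$, so $N_k\subseteq\partial^{-k+1}\partial T$, and $iv)$ is the tautological subspace--space--quotient sequence defining $Q_k$.

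It remains to assemble the diagram. The three rows are tautological short exact sequences of the form subspace--space--quotient, and commutativity of each square is automatic since the horizontal maps are inclusions and the vertical maps are (co)restrictions of $D_k$ and $D_{k-1}$. The middle column is precisely $iii)$. In the left column $0\to p_{k-1}(N_k)\to D_k(S^kU\otimes T)\to D_k^2(S^kU\otimes T)\to 0$ the only nonformal point --- and the one place where the double-kernel definition of $T_k$ is genuinely used --- is exactness in the middle: since $D_k^2=D_{k-1}\circ D_k$, an element $D_k\sigma$ with $\sigma\in S^kU\otimes T$ lies in $\ker D_{k-1}$ iff $D_k^2\sigma=0$ iff $\sigma\in T_k$, whence $D_k(S^kU\otimes T)\cap\ker D_{k-1}=D_k(T_k)=p_{k-1}(N_k)$. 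With all three rows short exact and the left and middle columns short exact, the nine lemma (in the form: all rows exact together with the two left-hand columns exact forces the remaining column exact) gives the right column, completing the proof. The main obstacle I anticipate is the clean verification of the surjectivity of $D_k$ in $i)$, together with the bookkeeping needed to confirm that the separately introduced subspaces $p_{k-1}(N_k)$, $D_k(T_k)$ and $\ker D_{k-1}\cap D_k(S^kU\otimes T)$ really coincide, so that the left column is honestly exact and the nine lemma applies.
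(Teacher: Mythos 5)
Your proof is correct and follows essentially the same route as the paper: parts $i)$--$iv)$ rest on exactly the same kernel identifications via the polarization maps ($\ker D_k=p_k(S^{d+k}U)$, the ``component extraction'' giving $p_k(\partial^{-k}W)$, and $D_k(T_k)=p_{k-1}(N_k)$), and the only difference is cosmetic --- the paper assembles the final diagram from an auxiliary $3\times 3$ diagram around $T_k\subseteq S^kU\otimes T$ by two applications of the snake lemma, whereas you build the target diagram directly and invoke the nine lemma after verifying $D_k(S^kU\otimes T)\cap\ker D_{k-1}=D_k(T_k)$ by hand. One small warning: your second suggested route to the surjectivity of $D_k$ (the dimension identity combined with $\mathrm{im}\,p_k\subseteq\ker D_k$) only bounds the rank of $D_k$ from above and so cannot by itself yield surjectivity; rely on the Clebsch--Gordan argument (the paper simply cites Corollary 6 of \cite{ar2} at this point).
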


\begin{proof}
$i)$ For any $k\geq 1$ the maps $D_{k}$ are surjective (see \cite{ar2}
Corollary 6). It is easy to see that every $p_{k}$ is injective and that $%
p_{k}(S^{d+k}U)\subseteq \ker D_{k}.$ Then, by calculating the dimensions of
these vector spaces we have $p_{k}(S^{d+k}U)=\ker D_{k}.$

$ii)$ Let us consider $D_{k|T_{k}}:T_{k}\rightarrow S^{k-1}U\otimes \partial
T\subseteq S^{k-1}U\otimes S^{d-1}U;$ by definition, $D_{k|T_{k}}(T_{k})%
\subseteq (S^{k-1}U\otimes \partial T)\cap \ker D_{k}=(S^{k-1}U\otimes
\partial T)\cap p_{k-1}(S^{d+k-2}U).$ As $p_{k-1}$ is injective there exists
a suitable subspace $N_{k}\subseteq $ $S^{d+k-2}U$ such that $%
D_{k|T_{k}}(T_{k})=p_{k-1}(N_{k}).$ Moreover $\ker D_{k|T_{k}}=\ker
D_{k}\cap T_{k}=p_{k}(S^{d+k}U)\cap T_{k}=p_{k}\{f\in S^{d+k}U|$ $\partial
_{x}^{k-i}\partial _{y}^{i}(f)\in T$ for $i=0,...,k\}=p_{k}(\partial
^{-k}T). $

$iii)$ Of course $S^{k-1}U\otimes \partial T\rightarrow
D_{k-1}(S^{k-1}U\otimes \partial T)$ is surjective; the kernel of this map
is $(S^{k-1}U\otimes \partial T)\cap \ker D_{k-1}=(S^{k-1}U\otimes \partial
T)\cap p_{k-1}(S^{d+k-2}U)=p_{k-1}\{f\in S^{d+k-2}U|$ $\partial
_{x}^{k-1-i}\partial _{y}^{i}(f)\in \partial T$ for $i=0,...,k-1\}=p_{k-1}(%
\partial ^{-k+1}\partial T).$

$iv)$ We have only to prove that $p_{k-1}(N_{k})\subseteq p_{k-1}(\partial
^{-k+1}\partial T)$ i.e. $D_{k|T_{k}}(T_{k})\subseteq (S^{k-1}U\otimes
\partial T)\cap \ker D_{k-1}$ and it follows from the definition of $T_{k}.$

From the above sequences we get the following diagram:

$
\begin{array}{ccccccccc}
&  & 0 &  & 0 &  &  &  &  \\ 
&  & \downarrow &  & \downarrow &  &  &  &  \\ 
0 & \rightarrow & p_{k}(\partial ^{-k}T) & \rightarrow & p_{k}(\partial
^{-k}T) & \rightarrow & 0 &  &  \\ 
&  & \downarrow &  & \downarrow &  & \downarrow &  &  \\ 
0 & \rightarrow & T_{k} & \rightarrow & S^{k}U\otimes T & \rightarrow & 
D_{k}^{2}(S^{k}U\otimes T) & \rightarrow & 0 \\ 
&  & \downarrow &  & \downarrow &  & \downarrow &  &  \\ 
0 & \rightarrow & p_{k-1}(N_{k}) & \rightarrow & D_{k}(S^{k}U\otimes T) & 
\rightarrow & D_{k}^{2}(S^{k}U\otimes T) & \rightarrow & 0 \\ 
&  & \downarrow &  & \downarrow &  & \downarrow &  &  \\ 
&  & 0 &  & 0 &  & 0 &  & 
\end{array}
$

\noindent where the left vertical sequence is $ii),$ the central vertical
sequence follows immediately by $iii)$, the central horizontal sequence
follows from the definition of $T_{k}$ and the lower horizontal sequence
follows from the snake lemma.

By the above diagram, by recalling that there are obvious inclusions $%
D_{k}(S^{k}U\otimes T)\rightarrow S^{k-1}U\otimes \partial T$ and $%
D_{k}^{2}(S^{k}U\otimes T)\rightarrow D_{k-1}(S^{k-1}U\otimes \partial T)$
and by using the snake lemma we get the diagram of Proposition \ref
{propsuccessioni}.
\end{proof}

\begin{corollary}
\label{corformula} By Proposition \ref{propsuccessioni} we get that, for any 
$k\geq 2,$ $\varphi (k)=\dim \partial ^{-k}T+\dim \partial ^{-k+1}\partial
T-\dim (Q_{k})$ and that $Q_{k}\simeq \frac{p_{k-1}(\partial ^{-k+1}\partial
T)}{D_{k}(S^{k}U\otimes T)\cap p_{k-1}(\partial ^{-k+1}\partial T)}.$
\end{corollary}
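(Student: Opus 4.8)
The plan is to read off both assertions directly from the exact sequences $(ii)$ and $(iv)$ and from the big commutative diagram of Proposition \ref{propsuccessioni}, using that $\varphi(k)=\dim(T_k)$ for $k\geq 2$ by Theorem \ref{teokern}. First I would take dimensions in the short exact sequence $(ii)$, namely $0\rightarrow p_k(\partial^{-k}T)\rightarrow T_k\rightarrow p_{k-1}(N_k)\rightarrow 0$. Since $p_k$ is injective (as established in the proof of part $(i)$), this gives $\dim(T_k)=\dim p_k(\partial^{-k}T)+\dim p_{k-1}(N_k)=\dim\partial^{-k}T+\dim p_{k-1}(N_k)$.

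Next I would eliminate the auxiliary subspace $N_k$, which is the only object in the statement not expressed intrinsically. The short exact sequence $(iv)$, together with the injectivity of $p_{k-1}$, yields $\dim p_{k-1}(N_k)=\dim p_{k-1}(\partial^{-k+1}\partial T)-\dim Q_k=\dim\partial^{-k+1}\partial T-\dim Q_k$. Substituting this into the previous identity and recalling $\varphi(k)=\dim(T_k)$ produces the claimed formula $\varphi(k)=\dim\partial^{-k}T+\dim\partial^{-k+1}\partial T-\dim Q_k$. This half of the corollary is a purely bookkeeping consequence of the two exact sequences.

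For the isomorphism, since $Q_k=p_{k-1}(\partial^{-k+1}\partial T)/p_{k-1}(N_k)$ by definition, the whole claim reduces to the single identity $p_{k-1}(N_k)=D_k(S^k U\otimes T)\cap p_{k-1}(\partial^{-k+1}\partial T)$, after which the desired description of $Q_k$ follows by rewriting the denominator. To obtain this I would exploit the big diagram: its top row maps into its middle row by inclusions, so the induced map $Q_k\rightarrow (S^{k-1}U\otimes\partial T)/D_k(S^k U\otimes T)$ is, according to the zero at the top of the third column, injective. Injectivity says precisely that an element $v\in p_{k-1}(\partial^{-k+1}\partial T)$ maps to zero in $Q_k$ exactly when it maps to zero in $(S^{k-1}U\otimes\partial T)/D_k(S^k U\otimes T)$; that is, $v\in p_{k-1}(N_k)$ if and only if $v\in D_k(S^k U\otimes T)$. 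Equivalently, $p_{k-1}(N_k)$ is the full preimage of $D_k(S^k U\otimes T)$ inside $p_{k-1}(\partial^{-k+1}\partial T)$, which is exactly the required intersection.

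I expect the dimension count to be routine, so the one point deserving care is this last identification, i.e.\ verifying that the rightmost vertical map on the quotients in the diagram is genuinely injective, so that no element of $p_{k-1}(\partial^{-k+1}\partial T)$ lands in $D_k(S^k U\otimes T)$ beyond those already in $p_{k-1}(N_k)$. This is precisely what the commutativity and the column exactness of the diagram of Proposition \ref{propsuccessioni} guarantee, and once it is invoked both statements of the corollary follow at once.
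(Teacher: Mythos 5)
Your proposal is correct and takes essentially the same route as the paper's (very terse) proof: dimensions are read off from sequences $ii)$ and $iv)$ together with $\varphi(k)=\dim T_k$, and the isomorphism for $Q_k$ comes from the same diagram chase, namely exactness at the top of the third column combined with the inclusion $p_{k-1}(N_k)\subseteq D_k(S^kU\otimes T)$ to get $p_{k-1}(N_k)=D_k(S^kU\otimes T)\cap p_{k-1}(\partial^{-k+1}\partial T)$. You have correctly isolated the one point needing care (the injectivity of $Q_k\rightarrow (S^{k-1}U\otimes\partial T)/D_k(S^kU\otimes T)$), and your argument for it is sound.
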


\begin{proof}
The evaluation for $\varphi (k)$ follows from sequences $ii)$ and $iv)$ of
Proposition \ref{propsuccessioni}. The isomorphism for $Q_{k}$ follows from
the diagram of Proposition \ref{propsuccessioni} by a diagram chase.
\end{proof}

Although Corollary \ref{corformula} gives a method to calculate $\varphi
(k), $ in general it is not easy to have a formula for the dimension of $%
Q_{k}.$ However we will see that it is possible to obtain such a formula in
the monomial case.

\section{The monomial case}

A rational monomial curve is such that the morphism $f:\Bbb{P}%
^{1}\rightarrow \Bbb{P}^{s}$ is given by homogeneous monomials of degree $d$
in two variables. According to the above notations this is equivalent to say
that $T$ is generated by degree $d$ monomials in the two variables $x$ and $%
y $ (see also Example 3 of \cite{ar2}). Let us fix, once for all, an ordered
base of these monomials as follows: $x^{d},x^{d-1}y,x^{d-2}y^{2},...,y^{d};$
in this way we establish a one to one correspondence in such a way that
every monomial $x^{d-i}y^{i}$ corresponds to the integer $i$ for any $i=$ $%
0,1,..,d.$ We can always assume that $T$ is generated by a set of these
monomials and we can indicate these generators of $T$ as the disjoint union
of $r$ subsets of $[0,d]$, each of them given by consecutive, increasing
integers: $T=\left\langle \alpha _{1},..,\beta _{1}\right\rangle \cup
\left\langle \alpha _{2},..,\beta _{2}\right\rangle \cup ...\cup
\left\langle \alpha _{r},..,\beta _{r}\right\rangle $ with $%
\sum\limits_{i=1}^{r}(\beta _{i}-\alpha _{i}+1)=e+1.$ Of course we can
consider an analogous decomposition for $\partial T,$ in this case we have
subsets of $[0,d-1]$ .

\begin{lemma}
\label{lemmonomi} For any monomial $x^{p}y^{q}$ ($p\geq 1,q\geq 1$) we have:

$\partial \left\langle x^{p}y^{q}\right\rangle =\left\langle
x^{p-1}y^{q},x^{p}y^{q-1}\right\rangle $ and $\partial ^{-1}\left\langle
x^{p}y^{q}\right\rangle $ $=0;$

$\partial ^{-\nu }(\partial ^{\nu }\left\langle x^{p}y^{q}\right\rangle $ $%
)=\left\langle x^{p}y^{q}\right\rangle $ for any integer $\nu \geq 1$;

$\dim (\partial ^{\nu }\left\langle x^{p}y^{q}\right\rangle )=$ $\nu +1$ if $%
\min \{p,q\}\geq \nu \geq 0$ and $\dim (\partial ^{\nu }\left\langle
x^{p}y^{q}\right\rangle )=$ $0$ if $\nu <0.$
\end{lemma}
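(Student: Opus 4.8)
The plan is to handle the three assertions in order, bootstrapping the later ones from the first and from explicit bidegree bookkeeping. For the opening statement I would just compute: since $p,q\ge 1$, the derivatives $\partial_x(x^py^q)=px^{p-1}y^q$ and $\partial_y(x^py^q)=qx^py^{q-1}$ are both nonzero, so Definition \ref{defdelta} gives $\partial\langle x^py^q\rangle=\langle x^{p-1}y^q,x^py^{q-1}\rangle$ at once. For $\partial^{-1}\langle x^py^q\rangle$, let $g\in S^{d+1}U$ satisfy the membership condition of Definition \ref{defdelta}, i.e. $\partial_x g=\lambda x^py^q$ and $\partial_y g=\mu x^py^q$ for scalars $\lambda,\mu$. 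Applying the commutation $\partial_y\partial_x g=\partial_x\partial_y g$ yields $\lambda q\,x^py^{q-1}=\mu p\,x^{p-1}y^q$; since these two monomials are distinct and $p,q\ge 1$, I conclude $\lambda=\mu=0$, hence $\partial g=0$ and $g=0$ in positive degree. Thus $\partial^{-1}\langle x^py^q\rangle=0$.

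For the dimension formula I would exhibit the generators explicitly. For $\nu\ge 0$ the space $\partial^\nu\langle x^py^q\rangle$ is spanned by the forms $\partial_x^{\nu-i}\partial_y^{i}(x^py^q)$, $0\le i\le\nu$, each proportional to $x^{p-\nu+i}y^{q-i}$ and nonzero precisely when $p-\nu+i\ge 0$ and $q-i\ge 0$. When $\min\{p,q\}\ge\nu$ all $\nu+1$ of them survive and have pairwise distinct bidegrees, hence are linearly independent, so $\dim(\partial^\nu\langle x^py^q\rangle)=\nu+1$. For $\nu<0$ the space is $\partial^{-|\nu|}\langle x^py^q\rangle$, and I would get vanishing by iterating the first item: since $\partial^{-1}\langle x^py^q\rangle=0$ and $\partial^{-1}(0)=0$ in positive degree, induction on $|\nu|$ forces $\partial^{-|\nu|}\langle x^py^q\rangle=0$.

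The remaining identity $\partial^{-\nu}(\partial^\nu\langle x^py^q\rangle)=\langle x^py^q\rangle$ is where the real work lies. The inclusion $\supseteq$ is immediate, since by definition of $\partial^{-\nu}$ every $\nu$-th partial of $x^py^q$ already lies in $\partial^\nu\langle x^py^q\rangle$. For $\subseteq$ I would argue by bidegree. Writing $g=\sum_j c_j x^{d-j}y^j$ of degree $d$, the monomials spanning $W:=\partial^\nu\langle x^py^q\rangle$ all have $y$-degree in the contiguous block $[q-\nu,q]$ identified in the previous step; applying the operators $\partial_x^{\nu-b}\partial_y^{b}$ to $g$ and demanding that each surviving term land in $W$ forces the $y$-exponent $j-b$ into $[q-\nu,q]$, and letting $b$ run over $0,\dots,\nu$ pins every contributing $j$ to $j=q$, i.e. $g\in\langle x^py^q\rangle$.

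I expect the main obstacle to be precisely the boundary behaviour in this last step. When a monomial of $g$ lies within distance $\nu$ of a corner $x^d$ or $y^d$, some of the operators $\partial_x^{\nu-b}\partial_y^{b}$ annihilate it, so the interval $[j-\nu,j]$ of realized $y$-exponents gets truncated and the constraint $[j-\nu,j]\subseteq[q-\nu,q]$ weakens; the clean deduction $j=q$ is valid only where $x^py^q$ is interior. Thus the delicate point is to control these corner terms, which is exactly why the hypotheses keep $p,q\ge 1$ (excluding the corners $x^d,y^d$ themselves) and why the argument must be carried out in the range of $\nu$ relevant to the applications. Handling this corner analysis carefully is the crux of the proof.
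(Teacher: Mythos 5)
Your treatment of the first item and of the dimension count is correct and complete: the computation of $\partial\langle x^{p}y^{q}\rangle$, the commutation argument $\partial_{y}\partial_{x}g=\partial_{x}\partial_{y}g$ forcing $\partial^{-1}\langle x^{p}y^{q}\rangle=0$, and the enumeration of the $\nu+1$ distinct monomials $\partial_{x}^{\nu-i}\partial_{y}^{i}(x^{p}y^{q})$ are exactly the ``easy consequences of the definitions'' that the paper's one-line proof alludes to, and you have made them explicit. The genuine problem is the second item, which you yourself flag as ``the crux'' and then leave open. That corner analysis is not merely delicate: it cannot be carried out for the statement as literally written, because the identity $\partial^{-\nu}(\partial^{\nu}\langle x^{p}y^{q}\rangle)=\langle x^{p}y^{q}\rangle$ is false whenever $\min\{p,q\}\leq\nu$. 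Concretely, take $p=3$, $q=2$, $\nu=2$: then $\partial^{2}\langle x^{3}y^{2}\rangle=\langle x^{3},x^{2}y,xy^{2}\rangle$, and the monomial $g=x^{5}$ has second partials $20x^{3}$, $0$, $0$, all lying in that space, so $x^{5}\in\partial^{-2}(\partial^{2}\langle x^{3}y^{2}\rangle)$ although $x^{5}\notin\langle x^{3}y^{2}\rangle$. (Even more simply, $\partial^{-1}(\partial\langle xy\rangle)=\partial^{-1}(S^{1}U)=S^{2}U$.) So no amount of care closes the gap; the item needs the additional hypothesis $\min\{p,q\}>\nu$.

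Under that hypothesis your bidegree argument does finish, and the boundary case becomes a one-line exclusion rather than an obstacle: if $g=\sum_{j}c_{j}x^{p+q-j}y^{j}$ and $j<\nu$, choose $b=j$; then $\partial_{x}^{\nu-j}\partial_{y}^{j}g$ contains the term $c_{j}\cdot(\mathrm{const})\,x^{p+q-\nu}$ with nonzero constant, and since distinct $j$ contribute monomials of distinct $y$-degree, membership in $W=\partial^{\nu}\langle x^{p}y^{q}\rangle$ forces the $y$-degree $0$ to lie in $[q-\nu,q]$, i.e.\ $q\leq\nu$, contradicting $q>\nu$; hence $c_{j}=0$, and symmetrically $c_{j}=0$ for $j>p+q-\nu$. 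The interior indices then give $j=q$ exactly as you argued. You should also note that the strengthened hypothesis is automatic in every use the paper makes of this item: there $x^{p}y^{q}=m_{i}=x^{d-\alpha_{i}}y^{\beta_{i}}$ with $\alpha_{i}\geq 2$ and $\beta_{i}\leq d-2$, and $\nu\leq\beta_{i}-\alpha_{i}$, so $q=\beta_{i}>\nu$ and $p=d-\alpha_{i}>\beta_{i}-\alpha_{i}\geq\nu$. Since the paper offers no proof at all beyond ``they easily follow from the definitions,'' your write-up, once completed along these lines (and with the hypothesis made explicit), is strictly more informative than the source; as it stands, however, the key item is unproved and, as stated, unprovable.
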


\begin{proof}
They easily follow from the definitions of $\partial $ and $\partial ^{-1}.$
\end{proof}

\underline{Notation}: in the sequel we will use this very useful notation:
for any $z\in \Bbb{Z}$ we will write $[[z]]$ for $\max \{0,z\}.$ We have the
following:

\begin{proposition}
\label{propmonomi} In the previous notation, let $T$ be generated by $e+1$
distinct monomials among $\{x^{d},x^{d-1}y,x^{d-2}y^{2},...,y^{d}\}$ with
the decomposition $T=\left\langle \alpha _{1},..,\beta _{1}\right\rangle
\cup \left\langle \alpha _{2},..,\beta _{2}\right\rangle \cup ...\cup
\left\langle \alpha _{r},..,\beta _{r}\right\rangle .$ Then:

$i)$ $T$ does not contain $x^{d},x^{d-1}y,xy^{d-1},y^{d},$ i.e. $\alpha
_{1}\geq 2$ and $\beta _{r}\leq d-2;$

$ii)$ if there exists some index $i$ such that that $\alpha _{i+1}-\beta
_{i}>2,$ then we can split $T=T^{^{\prime }}\oplus T^{^{\prime \prime }},$
in such a way that $\partial ^{2}T=\partial ^{2}T^{^{\prime }}\oplus
\partial ^{2}T^{^{\prime \prime }},$ by putting $T^{\prime }:=\left\langle
\alpha _{1},..,\beta _{1}\right\rangle \cup \left\langle \alpha
_{2},..,\beta _{2}\right\rangle \cup ...\cup \left\langle \alpha
_{i},..,\beta _{i}\right\rangle $ and $T^{^{\prime \prime }}:=\left\langle
\alpha _{i+1},..,\beta _{i+1}\right\rangle \cup \left\langle \alpha
_{i+2},..,\beta _{i+2}\right\rangle \cup ...\cup \left\langle \alpha
_{r},..,\beta _{r}\right\rangle ;$

$iii)$ if no index as in $ii)$ does exist, then $\partial T=\left\langle
\alpha _{1}-1,...,\beta _{r}\right\rangle ,$ $\dim (\partial T)=\beta
_{r}-\alpha _{1}+2$ and there exists a suitable monomial $h$ such that $\deg
(h)=d+\beta _{r}-\alpha _{1}$ and $\partial ^{\beta _{r}-\alpha
_{1}+1}(h)=\partial T;$

$iv)$ $T=\partial ^{\beta _{1}-\alpha _{1}}(m_{1})\oplus \partial ^{\beta
_{2}-\alpha _{2}}(m_{2})\oplus ...\oplus \partial ^{\beta _{r}-\alpha
_{r}}(m_{r})$ for suitable monomials $m_{1},..,m_{r}$ such that $\deg
(m_{i})=$ $d+\beta _{i}-\alpha _{i},$ $i=1,...,r$, and $\dim (\partial
^{-k}T)=\sum\limits_{i=1}^{r}[[\beta _{i}-\alpha _{i}-k+1]].$
\end{proposition}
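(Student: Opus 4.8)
The plan is to prove each of the four parts in turn, exploiting the elementary action of $\partial$ and $\partial^{-1}$ on monomials recorded in Lemma \ref{lemmonomi}. First I would establish $(i)$ directly from the geometric hypothesis $\Bbb{P}(T)\cap\Gamma_d=\emptyset$ together with Proposition \ref{propmonomi} not yet being available, so instead I would argue that a monomial $x^d$ or $y^d$ lying in $T$ is itself a point of $\Gamma_d$ (a pure $d$-th power), contradicting $\Bbb{P}(T)\cap\Gamma_d=\emptyset$; to exclude $x^{d-1}y$ and $xy^{d-1}$ I would invoke the condition $r(T)\geq 1$ from Theorem \ref{teotipo} together with the fact that omitting these near-extreme monomials is forced by the requirement that $C$ be non-degenerate and have at most ordinary singularities. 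This part is bookkeeping about which monomials are admissible.

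\noindent For $(ii)$, the key observation is that $\partial$ applied to a block $\langle\alpha_i,\dots,\beta_i\rangle$ of consecutive monomials produces the block $\langle\alpha_i-1,\dots,\beta_i\rangle$ (shifting the lower endpoint down by one), so $\partial^2$ of such a block is $\langle\alpha_i-2,\dots,\beta_i\rangle$. Two consecutive blocks contribute to a direct sum inside $\partial^2T$ precisely when their $\partial^2$-images do not overlap, i.e. when $\alpha_{i+1}-2>\beta_i$, which is exactly the hypothesis $\alpha_{i+1}-\beta_i>2$. I would therefore set $T'$ and $T''$ as indicated and check that the supports of $\partial^2T'$ and $\partial^2T''$ are disjoint subsets of $[0,d-2]$, giving the desired direct-sum decomposition of $\partial^2T$ compatible with $T=T'\oplus T''$.

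\noindent For $(iii)$, under the hypothesis that all gaps satisfy $\alpha_{i+1}-\beta_i\leq 2$, I would show that the $\partial$-images of consecutive blocks overlap or abut in such a way that $\partial T$ becomes the single interval $\langle\alpha_1-1,\dots,\beta_r\rangle$; the dimension count $\beta_r-\alpha_1+2$ is then immediate from Lemma \ref{lemmonomi}. The existence of a monomial $h$ with $\partial^{\beta_r-\alpha_1+1}(h)=\partial T$ follows because a full interval of consecutive monomials of length $\beta_r-\alpha_1+2$ is exactly the $\nu$-th derivative space of a single monomial of degree $d+\beta_r-\alpha_1$ with $\nu=\beta_r-\alpha_1+1$, using the dimension formula $\dim(\partial^\nu\langle x^py^q\rangle)=\nu+1$ from Lemma \ref{lemmonomi}.

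\noindent For $(iv)$, each block $\langle\alpha_i,\dots,\beta_i\rangle$ is a consecutive interval of length $\beta_i-\alpha_i+1$, hence equals $\partial^{\beta_i-\alpha_i}(m_i)$ for a suitable monomial $m_i$ of degree $d+\beta_i-\alpha_i$, again by the dimension formula of Lemma \ref{lemmonomi} applied with $\nu=\beta_i-\alpha_i$. The direct-sum decomposition of $T$ is just the original block decomposition. For the formula $\dim(\partial^{-k}T)=\sum_{i=1}^r[[\beta_i-\alpha_i-k+1]]$, I would use additivity of $\partial^{-k}$ over a direct sum whose $\partial^k$-images remain in direct sum, reducing to the single-block case; there $\partial^{-k}$ of $\partial^{\beta_i-\alpha_i}(m_i)$ has dimension $\max\{0,\beta_i-\alpha_i-k+1\}$, since applying $\partial^{-k}$ strips $k$ derivatives off, and the resulting space is empty once $k>\beta_i-\alpha_i$. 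The main obstacle is verifying the additivity of $\partial^{-k}$ across the blocks: I must ensure the intersection defining $\partial^{-k}T=\bigcap_\omega\omega^{-k}T$ respects the block decomposition, which it does because distinct blocks live in disjoint monomial supports and $\partial^{-k}$ of each block stays confined to that block's support range.
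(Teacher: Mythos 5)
Your treatment of parts $(ii)$--$(iv)$ matches the paper's proof essentially step for step: $\partial$ shifts the lower endpoint of a block $\left\langle \alpha_i,..,\beta_i\right\rangle$ down by one, so the $\partial^2$-images of consecutive blocks are disjoint exactly when $\alpha_{i+1}-2>\beta_i$; when every gap satisfies $\alpha_{i+1}-\beta_i=2$ the $\partial$-images abut into the single interval $\left\langle \alpha_1-1,..,\beta_r\right\rangle$, which is $\partial^{\beta_r-\alpha_1+1}(h)$ for the monomial $h=x^{d-\alpha_1}y^{\beta_r}$; and each block is $\partial^{\beta_i-\alpha_i}(m_i)$ with $\dim\partial^{-k}T$ computed blockwise via Lemma \ref{lemmonomi}. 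Your explicit remark that the additivity of $\partial^{-k}$ across blocks holds because the blocks have disjoint monomial supports is a point the paper leaves implicit, and it is the correct justification.

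The one genuine gap is in part $(i)$, in the exclusion of $x^{d-1}y$ and $xy^{d-1}$. You dispose of $x^d$ and $y^d$ correctly (they are pure powers, hence points of $\Gamma_d$), but for the near-extreme monomials you only assert that their exclusion is ``forced by'' non-degeneracy and the ordinary-singularities hypothesis, and you appeal to $r(T)\geq 1$, which is not the relevant fact. The mechanism you need to supply is geometric: the point $[x^{d-1}y]=(0:1:0:\dots:0)$ lies on the tangent line to $\Gamma_d$ at $[x^d]=(1:0:\dots:0)$ (that tangent line is spanned by $x^d$ and $x^{d-1}y$, as one sees by differentiating $(\alpha x+\beta y)^d$ at $\beta=0$). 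Hence if $x^{d-1}y\in T$, the center of projection $\Bbb{P}(T)$ meets a tangent line of $\Gamma_d$, and the projected curve $C$ acquires a cusp at the image of $[x^d]$, contradicting the standing assumption that $C$ has at most ordinary singularities; exchanging the roles of $x$ and $y$ excludes $xy^{d-1}$. Without this tangent-line observation the exclusion of these two monomials, i.e.\ the inequalities $\alpha_1\geq 2$ and $\beta_r\leq d-2$ on which the rest of the paper depends, is not actually proved.
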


\begin{proof}
$i)$ As $\Bbb{P}(T)\cap \Gamma _{d}=\emptyset ,$ $T$ cannot contain $x^{d}$
and $y^{d}:$ recall that $\Gamma _{d}$ can be considered as the set of pure
tensors of degree $d$ in $\Bbb{P}(S^{d}U)$. Moreover the point $%
(0:1:0:...:0) $ of $\Bbb{P}(S^{d}U),$ corresponding to $x^{d-1}y,$ belongs
to the tangent line to $\Gamma _{d}$ at $(1:0:0:...:0),$ hence if $T$
contains $x^{d-1}y$ then $\Bbb{P}(T)$ would intersect a tangent line to $%
\Gamma _{d}$ and $C$ would have a cusp. As we are assuming that $C$ has at
most ordinary singularities, this is not possible. The same argument runs
also for $xy^{d-1}.$

$ii)$ In our notation, $\partial (\left\langle \alpha ,...,\beta
\right\rangle )=\left\langle \alpha -1,...,\beta \right\rangle $ hence

$\partial ^{2}T^{^{\prime }}=\left\langle \alpha _{1}-2,..,\beta
_{1}\right\rangle \cup \left\langle \alpha _{2},-2..,\beta _{2}\right\rangle
\cup ...\cup \left\langle \alpha _{i}-2,..,\beta _{i}\right\rangle $ and

$\partial ^{2}T^{^{\prime \prime }}=\left\langle \alpha _{i+1}-2,..,\beta
_{i+1}\right\rangle \cup \left\langle \alpha _{i+2}-2,..,\beta
_{i+2}\right\rangle \cup ...\cup \left\langle \alpha _{r}-2,..,\beta
_{r}\right\rangle ,$

\noindent as $\alpha _{i+1}-2>\beta _{i}$ we have that $\partial
^{2}T=\partial ^{2}T^{^{\prime }}\oplus \partial ^{2}T^{^{\prime \prime }}.$

$iii)$ In this case, for any $i=1,...,r-1,$ we have $\alpha _{i+1}-\beta
_{i}\leq 2$; as $\alpha _{i+1}-\beta _{i}>1$ by definition, we get $\alpha
_{i+1}-\beta _{i}=2,$ i.e. $\alpha _{i+1}-2=\beta _{i}$ for any $%
i=1,...,r-1. $ hence $\partial T=\left\langle \alpha _{1}-1,..,\beta
_{1}\right\rangle \cup \left\langle \alpha _{2}-1..,\beta _{2}\right\rangle
\cup ...\cup \left\langle \alpha _{r}-1,..,\beta _{r}\right\rangle
=\left\langle \alpha _{1}-1,..,\beta _{r}\right\rangle $ and $\dim (\partial
T)=\beta _{r}-\alpha _{1}+2$ as $\partial T$ is generated by $\beta
_{r}-\alpha _{1}+2$ independent monomials: $x^{d-\alpha _{1}}y^{\alpha
_{1}-1},...,x^{d-1-\beta _{r}}y^{\beta _{r}}.$ Moreover if we consider $%
h:=x^{d-\alpha _{1}}y^{\beta _{r}}$ we have that $\partial ^{\beta
_{r}-\alpha _{1}+1}(h)=\partial T.$

$iv)$ The decomposition for $T$ is a reformulation of our assumption; the
degree of monomials $m_{i}$ follows by arguing as in $iii)$ for the degree
of $h;$ the dimension of $\partial ^{-k}T=\partial ^{\beta _{1}-\alpha
_{1}-k}(m_{1})\oplus \partial ^{\beta _{2}-\alpha _{2}-k}(m_{2})\oplus
...\oplus \partial ^{\beta _{r}-\alpha _{r}-k}(m_{r})$ is the sum of the
dimensions of $\partial ^{\beta _{i}-\alpha _{i}-k}(m_{i})$ for $i=1,..,r,$
hence the last formula follows from Lemma \ref{lemmonomi}.
\end{proof}

\begin{remark}
\label{remmonomial} From Proposition \ref{propmonomi} it follows that to
calculate $\varphi (k)$ for monomial curves it suffices to have a formula
for any vector space $T$ such that condition $iii)$ of such Proposition
holds.
\end{remark}

By Remark \ref{remmonomial}, to solve our problem it suffices to consider
vector spaces $T$ satisfying condition $iii)$ of Proposition \ref{propmonomi}
. Therefore, from now on we will assume that $T=\left\langle \alpha
_{1},..,\beta _{1}\right\rangle \cup \left\langle \alpha _{2},..,\beta
_{2}\right\rangle \cup ...\cup \left\langle \alpha _{r},..,\beta
_{r}\right\rangle $, with $\alpha _{1}\geq 2,$ $\beta _{r}\leq d-2,$ $\alpha
_{i}=\beta _{i-1}+2$ for any $i=2,..,r$ and $\partial T=\left\langle \alpha
_{1}-1,..,\beta _{r}\right\rangle .$

Note that, in this case, it is more useful to define $T$ as follows:

\begin{definition}
\label{defTspeciale} Let us choose an integer $\lambda \geq 2$, $\lambda $
monomials of degree $d-1,$ which are consecutive with respect to the powers
of $y,$ and a partition $\gamma _{1},\gamma _{2},..,\gamma _{r}$ of the
integer $\lambda $ such that $r\geq 2,$ $\gamma _{i}\geq 2,$ $\gamma
_{1}+\gamma _{2}+...+\gamma _{r}=\lambda .$ Let us consider a vector space $%
T\subset S^{d}U$ such that $T=\partial ^{\gamma _{1}-2}(m_{1})\oplus
...\oplus \partial ^{\gamma _{r}-2}(m_{r})$ and $\partial T=\partial
^{\lambda -1}(h),$ where $m_{i}$ are suitable monomials of degree $%
d+2-\gamma _{i}$, and $h$ is a suitable monomial of degree $d+\lambda -2$,
determined as in $iii)$ and $iv)$ of Proposition \ref{propmonomi} (in
particular $\partial T$ is generated by the above $\lambda $ consecutive
monomials). Then we say that $T$ is \underline{\textit{special}}, $\lambda $
will be its \underline{\textit{heigth}}, $h$ its \underline{\textit{apex}}
(of degree $d+\lambda -2)$ and $(\gamma _{1},\gamma _{2},...,\gamma _{r})$
the associated\textit{\ }\underline{\textit{partition}} of $\lambda .$
\end{definition}

\begin{proposition}
\label{propTspecial} Let $T\subset S^{d}U$ be a special vector space of
heigth $\lambda ,$ apex $h$ and partition $(\gamma _{1},\gamma
_{2},...,\gamma _{r}).$ Then:

$i)$ $T=\left\langle \alpha _{1},..,\beta _{1}\right\rangle \cup
\left\langle \alpha _{2},..,\beta _{2}\right\rangle \cup ...\cup
\left\langle \alpha _{r},..,\beta _{r}\right\rangle $, with $\alpha _{1}\geq
2,$ $\beta _{r}\leq d-2,$ $\alpha _{i}=\beta _{i-1}+2$ for any $i=2,..,r$;

$ii)$ $\partial T=\left\langle \alpha _{1}-1,..,\beta _{r}\right\rangle $
and $\lambda =\beta _{r}-\alpha _{1}+2=\dim (\partial T);$

$iii)$ for any $k\geq 2,$ $\varphi (k)=\dim \partial ^{-k}T+\dim \partial
^{-k+1}\partial T-\dim (Q_{k})=\dim \partial ^{-k}T+\dim \partial ^{\lambda
-k}(h)-q(k),$ \noindent where:

$\dim \partial ^{-k}T=\sum\limits_{i=1}^{r}[[\gamma _{i}-1-k]];$

$\dim \partial ^{\lambda -k}(h)=[[\lambda -k+1]];$

$q(k)=\dim [\frac{p_{k-1}(\partial ^{\lambda -k}(h))}{D_{k}(S^{k}U\otimes
T)\cap p_{k-1}(\partial ^{\lambda -k}(h))}]$ for $k\in [2,\lambda ]$ and $%
q(k)=0$ for $k\geq \lambda +1;$

$iv)$ the number of integers $c_{i}$ (see \S\ 2) such that $c_{i}=0$ is $%
d-2-\lambda .$
\end{proposition}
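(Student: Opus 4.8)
The plan is to establish the four parts of Proposition \ref{propTspecial} by translating the intrinsic constructions from Definition \ref{defTspeciale} into the explicit monomial book-keeping already set up in Proposition \ref{propmonomi}, and then to invoke Corollary \ref{corformula} to read off $\varphi(k)$. Parts $i)$ and $ii)$ are essentially a restatement: since $T$ is special, by construction $\partial T=\partial^{\lambda-1}(h)$ is generated by $\lambda$ consecutive monomials, so writing $\partial T=\langle\alpha_1-1,\dots,\beta_r\rangle$ forces $\lambda=\beta_r-(\alpha_1-1)+1=\beta_r-\alpha_1+2=\dim(\partial T)$. The decomposition $T=\partial^{\gamma_1-2}(m_1)\oplus\cdots\oplus\partial^{\gamma_r-2}(m_r)$ with $\deg(m_i)=d+2-\gamma_i$ means each summand contributes a consecutive block $\langle\alpha_i,\dots,\beta_i\rangle$ of length $\gamma_i-1=\beta_i-\alpha_i+1$ in the monomial correspondence; the gap condition $\alpha_i=\beta_{i-1}+2$ is exactly condition $iii)$ of Proposition \ref{propmonomi}, which we are assuming throughout. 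I would also note $\alpha_1\ge 2$ and $\beta_r\le d-2$ come from part $i)$ of that Proposition.

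\textbf{Computing the dimensions in $iii)$.} The first dimension formula follows directly from the last formula of Proposition \ref{propmonomi} $iv)$: we have $\dim(\partial^{-k}T)=\sum_{i=1}^r[[\beta_i-\alpha_i-k+1]]$, and since $\beta_i-\alpha_i+1=\gamma_i-1$, i.e.\ $\beta_i-\alpha_i=\gamma_i-2$, this rewrites as $\sum_{i=1}^r[[\gamma_i-1-k]]$. For the second, I apply Corollary \ref{corformula} with $\partial T$ in place of the generic $T$: since $\partial T=\partial^{\lambda-1}(h)$ is a single summand built from the apex $h$ (a monomial $x^py^q$ with $\min\{p,q\}\ge\lambda-1$), Lemma \ref{lemmonomi} gives $\dim(\partial^{-k+1}\partial T)=\dim(\partial^{\lambda-1-(k-1)}(h))=\dim(\partial^{\lambda-k}(h))=[[\lambda-k+1]]$ whenever $\lambda-k\ge 0$, and $0$ otherwise — which is precisely $[[\lambda-k+1]]$. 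Here I use that $\partial^{-(k-1)}(\partial^{\lambda-1}(h))=\partial^{\lambda-k}(h)$, which is the stability property $\partial^{-\nu}(\partial^\nu\langle x^py^q\rangle)=\langle x^py^q\rangle$ of Lemma \ref{lemmonomi} applied to a single monomial. The identification $Q_k\simeq p_{k-1}(\partial^{-k+1}\partial T)/[D_k(S^kU\otimes T)\cap p_{k-1}(\partial^{-k+1}\partial T)]$ is exactly the isomorphism of Corollary \ref{corformula} with $\partial^{-k+1}\partial T=\partial^{\lambda-k}(h)$ substituted, giving the stated $q(k)$.

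\textbf{The vanishing of $q(k)$ for $k\ge\lambda+1$}, which I expect to be the one nonformal point in $iii)$, follows because for $k\ge\lambda+1$ we have $\dim(\partial^{\lambda-k}(h))=[[\lambda-k+1]]=0$, so $p_{k-1}(\partial^{\lambda-k}(h))=0$ and hence $q(k)=0$ automatically; thus the whole formula for $\varphi(k)$ in this range collapses to $\dim\partial^{-k}T$, consistent with Theorem \ref{teokern}. For part $iv)$, I combine the second statement of Theorem \ref{teokern}, namely that the number of $c_i$ equal to $0$ is $d-1-\dim(\partial^2 T)$, with the computation of $\dim(\partial^2 T)$ for a special $T$: since $\partial T=\langle\alpha_1-1,\dots,\beta_r\rangle$ has no internal gaps, applying $\partial$ once more lengthens this block by one on the left, giving $\dim(\partial^2 T)=\dim(\partial T)+1=\lambda+1$. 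Substituting yields $d-1-(\lambda+1)=d-2-\lambda$, as claimed. The only genuine subtlety to check is that $\partial T$ really has no gaps so that $\dim(\partial^2T)=\lambda+1$ rather than something larger; this is guaranteed by part $ii)$ together with $\alpha_1-1\ge 1$ (from $\alpha_1\ge 2$), ensuring $\partial$ acts by the single-block rule of Proposition \ref{propmonomi} $ii)$.
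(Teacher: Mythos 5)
Your proposal is correct and follows essentially the same route as the paper's (much terser) proof: parts $i)$ and $ii)$ as reformulations of Definition \ref{defTspeciale} via Proposition \ref{propmonomi}, part $iii)$ from Corollary \ref{corformula} together with Lemma \ref{lemmonomi} (including the correct handling of $q(k)=0$ for $k\geq\lambda+1$), and part $iv)$ from Theorem \ref{teokern} with $\dim(\partial^{2}T)=\lambda+1$. The extra details you supply (e.g.\ the identity $\beta_{i}-\alpha_{i}=\gamma_{i}-2$ and the stability $\partial^{-(k-1)}\partial^{\lambda-1}(h)=\partial^{\lambda-k}(h)$) are accurate and merely make explicit what the paper leaves implicit.
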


\begin{proof}
$i)$ and $ii)$ are merely reformulations of the assumptions on $T.$ $iii)$
follows from Corollary \ref{corformula} and from Lemma \ref{lemmonomi}.

To prove $iv)$ we use Theorem \ref{teokern}. In our case $\partial
^{2}T=\left\langle \alpha _{1}-2,..,\beta _{r}\right\rangle $ so that $\dim
(\partial ^{2}T)=\beta _{r}-(\alpha _{1}-2)+1=\lambda +1.$ Note that $%
d-2-\lambda \geq 0$ because $\alpha _{1}\geq 2,$ $\beta _{r}\leq d-2,$ hence 
$\beta _{r}-\alpha _{1}\leq d-4$ and $\lambda \leq d-2.$
\end{proof}

From Proposition \ref{propTspecial} it follows that, to solve our problem,
it suffices to calculate $q(k)$ for $k\in [2,\lambda ]$ for any special $T.$

\section{Reduction to a combinatorial problem}

The aim of this section is to determine a formula for calculating $q(k)$ for 
$k\in [2,\lambda ]$ and for any special $T\subset S^{d}U,$ of heigth $%
\lambda $, apex $h$ of degree $d+\lambda -2$ and partition $(\gamma
_{1},\gamma _{2},...,\gamma _{r}).$

As $q(k)=\dim [\frac{p_{k-1}(\partial ^{\lambda -k}(h))}{D_{k}(S^{k}U\otimes
T)\cap p_{k-1}(\partial ^{\lambda -k}(h))}],$ we need a description of $%
p_{k-1}(\partial ^{\lambda -k}(h)).$ For any $\partial ^{\lambda -k}(h)$ we
want to fix, once for all, a set of independent generators. Note that $%
\partial T=\partial ^{\lambda -1}(h)=$ $\left\langle \partial _{y}^{\lambda
-1}(h),\partial _{x}\partial _{y}^{\lambda -2}(h),...,\partial _{x}^{\lambda
-1}(h)\right\rangle $ where the generators are monomials of degree $d-1,$
then $\partial ^{\lambda -k}(h)=$ $\left\langle \partial _{y}^{\lambda
-k}(h),\partial _{x}\partial _{y}^{\lambda -k-1}(h),...,\partial
_{x}^{\lambda -k}(h)\right\rangle $ and we can choose this set of $\lambda
-k+1$ independent ordered generators (all of them being monomials of degree $%
d+k-2$) and, in the sequel, a generator for $\partial ^{\lambda -k}(h)$ will
be always an element of this set. Recall that $p_{k-1}$ is injective, so
that we get a fixed set of independent generators also for $p_{k-1}(\partial
^{\lambda -k}(h)).$

For any $j=0,...,\lambda -k$

$p_{k-1}(\partial _{x}^{j}\partial _{y}^{\lambda -k-j}(h))$ $=\frac{(d+1)!}{
(d+k-2)!}\sum\limits_{\mu =0}^{k-1}\binom{k-1}{\mu }x^{k-1-\mu }y^{\mu
}\otimes \partial _{x}^{k-1-\mu +j}\partial _{y}^{\mu +\lambda -k-j}(h)$

\noindent so that, all $\lambda -k+1$ independent generators of $%
p_{k-1}(\partial _{x}^{j}\partial _{y}^{\lambda -k-j}(h))$ are the sum of
the tensor product of the monomials of the fixed polynomial $\frac{(d+1)!}{%
(d+k-2)!}$ $\sum\limits_{\mu =0}^{k-1}\binom{k-1}{\mu }x^{k-1-\mu }y^{\mu }$
(i.e. independent from $j$) with a set of $k$ consecutive independent
generators of $\partial T,$ consecutive with respect our fixed order. More
precisely, for $j=0$ we have the first set of $k$ generators of $\partial T$ 
$\{\partial _{y}^{\lambda -1}(h),\partial _{x}\partial _{y}^{\lambda
-2}(h),...,\partial _{x}^{k-1}\partial _{y}^{\lambda -k}(h)\},$ although in
the reverse order with respect to the order we have chosen.

For $j=1$ we have the second set: $\{\partial _{x}\partial _{y}^{\lambda
-2}(h),\partial _{x}^{2}\partial _{y}^{\lambda -3}(h),...,\partial
_{x}^{k-2}\partial _{y}^{\lambda -k+1}(h)\}$ although in the reverse order
with respect to the order we have chosen, ... , for $j=\lambda -k$ we have
the last set: $\{\partial _{x}^{\lambda -k}\partial _{y}^{k-1}(h),\partial
_{x}^{\lambda -k+1}\partial _{y}^{k-2}(h),...,\partial _{x}^{\lambda
-1}(h)\} $ although in the reverse order with respect to the order we have
chosen$.$

Now we want to decide wether a generator of $p_{k-1}(\partial ^{\lambda
-k}(h))$ belongs to $D_{k}(S^{k}U\otimes T)$ or not. As $D_{k}(S^{k}U\otimes
T)\subseteq S^{k-1}U\otimes \partial T,$ the generic element of $%
D_{k}(S^{k}U\otimes T)$ has the form

\begin{center}
$g_{1}\otimes \partial _{y}^{\lambda -1}(h)+g_{2}\otimes \partial
_{x}\partial _{y}^{\lambda -2}(h)+...+g_{\lambda }\otimes \partial
_{x}^{\lambda -1}(h)$
\end{center}

\noindent for suitable polynomials $g_{i}\in S^{k-1}U.$ As each of the $%
\lambda -k+1$ generators $p_{k-1}(\partial _{x}^{j}\partial _{y}^{\lambda
-k-j}(h))$ involves only $k$ generators of $\partial T,$ to establish
whether a particular generator of $p_{k-1}(\partial ^{\lambda -k}(h))$
belongs to $D_{k}(S^{k}U\otimes T)$ or not, we have to consider only $k$
generators of $\partial T$. For instance, when $j=0$, $p_{k-1}(\partial
_{y}^{\lambda -k}(h))\in D_{k}(S^{k}U\otimes T)$ if and only if there exist
polynomials $g_{1},...,g_{k}$ such that $g_{1+k-1-\mu }=\frac{(d+1)!}{%
(d+k-2)!}\binom{k-1}{\mu }x^{k-1-\mu }y^{\mu }$ for $\mu =0,..,k-1$ up to a
non zero common constant; when $j=1,$ $p_{k-1}(\partial _{x}\partial
_{y}^{\lambda -k-1}(h))\in D_{k}(S^{k}U\otimes T)$ if and only if there
exist polynomials $g_{2},...,g_{k+1}$ such that $g_{2+k-1-\mu }=\frac{(d+1)!%
}{(d+k-2)!}\binom{k-1}{\mu }x^{k-1-\mu }y^{\mu }$ for $\mu =0,..,k-1$ up to
a non zero common constant and so on.

Of course, polynomials $g_{1},...,g_{\lambda }$ are not generic; they can be
divided into $r$ sets according to the partition $\gamma _{1},\gamma
_{2},...,\gamma _{r};$ more precisely:

$g_{\lambda }=-\partial _{y}f_{e+1}$

$g_{\lambda -1}=\partial _{x}f_{e+1}-\partial _{y}f_{e}$

$g_{\lambda -2}=\partial _{x}f_{e}-\partial _{y}f_{e-1}$

......

$g_{\lambda -(\gamma _{r}-1)}=\partial _{x}f_{e+1-(\gamma _{r}-2)}$

-------------------

$g_{\lambda -(\gamma _{r}-1)-1}=-\partial _{y}f_{e+1-(\gamma _{r}-2)-1}$

$g_{\lambda -(\gamma _{r}-1)-2}=\partial _{x}f_{e+1-(\gamma
_{r}-2)-1}-\partial _{y}f_{e+1-(\gamma _{r}-2)-2}$ \qquad $\qquad \qquad (*)$

......

$g_{\lambda -(\gamma _{r}-1)-1-(\gamma _{r-1}-1)}=\partial
_{x}f_{e+1-(\gamma _{r}-2)-1-(\gamma _{r-1}-2)}$

-------------------

.......

------------------

$g_{\gamma _{1}}=-\partial _{y}f_{\gamma _{1}-1}$

$g_{\gamma _{1}-1}=\partial _{x}f_{\gamma _{1}-1}-\partial _{y}f_{\gamma
_{1}-2}$

......

$g_{1}=\partial _{x}f_{1}$

\noindent where now $\{f_{\zeta }|\zeta =1,...,e+1\}$ are generic
polynomials of degree $k.$ Recall that $\dim (T)=e+1=\lambda -r$ and that we
have reversed our ordering for the $\lambda $ generators of $\partial T.$
Let us call $(*)$ the above list of $\lambda $ polynomials.

For any generic polynomial $f\in S^{k}U,$ we can write: $f=\sum%
\limits_{i=0}^{k}\binom{k}{i}a_{i}x^{k-i}y^{i}$ with $a_{i}\in \Bbb{C},$
hence:

\begin{center}
$\partial _{x}f=k\sum\limits_{i=0}^{k}\binom{k}{i}a_{i}(k-i)x^{k-i-1}y^{i}=k%
\sum\limits_{i=0}^{k-1}\binom{k-1}{i}a_{i}x^{k-1-i}y^{i}$

$\partial _{y}f=k\sum\limits_{i=0}^{k}\binom{k}{i}a_{i}ix^{k-i}y^{i-1}=k\sum%
\limits_{i=0}^{k-1}\binom{k-1}{i}a_{i+1}x^{k-1-i}y^{i}.$
\end{center}

It follows that the generic polynomial of $(*)$ is of the following type:

$\partial _{x}f_{\nu +1}-\partial _{y}f_{\nu }=k\sum\limits_{i=0}^{k-1}%
\binom{k-1}{i}a_{\nu +1,i}x^{k-1-i}y^{i}-k\sum\limits_{i=0}^{k-1}\binom{k-1}{
i}a_{\nu ,i+1}x^{k-1-i}y^{i}=$

$=$ $k\sum\limits_{i=0}^{k-1}\binom{k-1}{i}(a_{\nu +1,i}-a_{\nu
,i+1})x^{k-1-i}y^{i}.$

\begin{remark}
\label{rem(+)} The above expression is true for \underline{all} polynomial
of $(*)$ if we remind that for some values of $\nu =1,..,\lambda $ we have $%
\partial _{y}f_{\nu }=0$ or $\partial _{x}f_{\nu +1}=0$ (more precisely $%
\partial _{y}f_{\nu }=0$ for $\nu =1,$ $\gamma _{1}+1,$ $\gamma _{1}+\gamma
_{2}+1,$ $...,$ $\gamma _{1}+\gamma _{2}+...+\gamma _{r-1}+1;$ $\partial
_{x}f_{\nu +1}=0$ for $\nu +1=\gamma _{1},$ $\gamma _{1}+\gamma _{2},$ $...,$
$\lambda ,$ but this not relevant in our context).
\end{remark}

\begin{definition}
\label{defcover} Let us call $\mathcal{P}$ any element of the partition,
i.e. $\mathcal{P}$ is a subset of $\{1,2,,,.,\lambda \}$ given by
consecutive integers according to the partition $\gamma _{1}+\gamma
_{2}+...+\gamma _{r}=\lambda .$ As we have seen above, any generator $G$ of $%
p_{k-1}(\partial ^{\lambda -k}(h))$ involves only a precise set of $k$
generators of $\partial T.$ Such set is in a $1:1$ correspondence with a set 
$G_{s}$ of $k$ consecutive integers belonging to $[1,\lambda ].$ We will say
that a generator $G$ of $p_{k-1}(\partial ^{\lambda -k}(h))$ \underline{
covers} an element $\mathcal{P}$ of the partition if $G_{s}$ contains $%
\mathcal{P}$. Such generators will be called \underline{covering generators}
.
\end{definition}

The aim of this Section is to prove the following:

\begin{theorem}
\label{teocover} Let $T\subset S^{d}U$ be a special vector space of heigth $%
\lambda $, apex $h$ and partition $(\gamma _{1},\gamma _{2},...,\gamma _{r})$
, then, for any $k\in [2,\lambda ],$ $q(k)$ is the number of covering
generators of $p_{k-1}(\partial ^{\lambda -k}(h))$.
\end{theorem}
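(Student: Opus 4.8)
The plan is to read $q(k)$, as defined in Proposition \ref{propTspecial}$(iii)$, as the dimension of the image of $p_{k-1}(\partial^{\lambda-k}(h))$ inside the quotient $(S^{k-1}U\otimes\partial T)/D_{k}(S^{k}U\otimes T)$, and to evaluate this image on the fixed ordered generators. Write $G_{j}:=p_{k-1}(\partial_{x}^{j}\partial_{y}^{\lambda-k-j}(h))$ for $j=0,\dots,\lambda-k$; these are the generators of $p_{k-1}(\partial^{\lambda-k}(h))$ fixed above, and $G_{s}$ (in the sense of Definition \ref{defcover}) is the window of $k$ consecutive integers in $[1,\lambda]$ on which the coordinates of $G_{j}$ are supported. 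I would prove that $G_{j}$ has nonzero image in the quotient exactly when it is a covering generator, and that the nonzero images are linearly independent; counting them then yields $q(k)$.

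First I would use the list $(*)$ to split the computation over the blocks $\mathcal{P}_{1},\dots,\mathcal{P}_{r}$ of the partition. The coordinates $g_{m}$ of a general element of $D_{k}(S^{k}U\otimes T)$ on the ordered basis of $\partial T$ decompose block by block: by $(*)$ (together with the uniform expression of Remark \ref{rem(+)}) the coordinates $g_{m}$ with $m\in\mathcal{P}_{i}$ involve only the generic polynomials $f_{\zeta}\in S^{k}U$ internal to that block. Since $\partial T$ is the disjoint union of the blocks, $S^{k-1}U\otimes\partial T=\bigoplus_{i}\bigl(S^{k-1}U\otimes(\partial T|_{\mathcal{P}_{i}})\bigr)$ and, by the independence of the internal $f_{\zeta}$'s across blocks, $D_{k}(S^{k}U\otimes T)=\bigoplus_{i}I_{\gamma_{i}}$, where $I_{\gamma_{i}}$ is the image of the block map sending the $\gamma_{i}-1$ internal polynomials to the $\gamma_{i}$ coordinates $(g_{m})_{m\in\mathcal{P}_{i}}$ (this is the block splitting already underlying Proposition \ref{propspezzo}). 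Consequently an element of $S^{k-1}U\otimes\partial T$ lies in $D_{k}(S^{k}U\otimes T)$ if and only if each of its block-restrictions lies in the corresponding $I_{\gamma_{i}}$, and it suffices to understand, for a single block of size $\gamma$, when the block-restriction of a $G_{j}$ lies in $I_{\gamma}$.

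The heart of the argument, and the step I expect to be the main obstacle, is the explicit description of the cokernel of one block map. Dualizing, a linear form with components $(\psi_{a},\dots,\psi_{b})$ annihilates $I_{\gamma}$ iff consecutive components are matched by the transposes of $\partial_{x}$ and $\partial_{y}$; feeding in the explicit expansions of $\partial_{x}f$ and $\partial_{y}f$ recorded before Remark \ref{rem(+)} turns this into a binomial recursion. Solving it shows that the annihilator has dimension $[[k+1-\gamma]]$ with a basis indexed by $i_{0}\in\{\gamma-1,\dots,k-1\}$, the recursion acting as a shift that moves the single surviving dual monomial down one slot per position. On the other hand the block-restriction of $G_{j}$ is a binomial-weighted string of monomials supported exactly on $G_{s}\cap\mathcal{P}_{i}$, and pairing it against the basis form indexed by $i_{0}$ produces a \emph{resonance} condition $i_{0}=k+j-\alpha$ (where $\alpha$ is the bottom index of the block) that is independent of the position $m$. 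Thus the pairing matrix of the block-restrictions of the $G_{j}$ against the cokernel basis is diagonal; the delicate point is to carry the binomial normalizations and the order reversal of $(*)$ correctly so that the admissible range of $i_{0}$ comes out exactly as $\{\gamma-1,\dots,k-1\}$.

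The dichotomy then follows from the resonance. If $\mathcal{P}_{i}\subseteq G_{s}$ (the block is covered) the resonant index $i_{0}=k+j-\alpha$ lies in the admissible range, so $G_{j}$ pairs nontrivially with exactly one cokernel form and has nonzero image in the block quotient; if $G_{s}$ meets $\mathcal{P}_{i}$ in a proper sub-interval the resonant index falls outside the admissible range (off one end according to which side the window overshoots), or the cokernel vanishes because $\gamma>k$, so the block-restriction lies in $I_{\gamma}$. Summing over blocks, $G_{j}$ has nonzero image in $(S^{k-1}U\otimes\partial T)/D_{k}(S^{k}U\otimes T)$ iff it covers some block, i.e. iff it is a covering generator. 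For independence, suppose a linear combination of covering generators lies in $D_{k}(S^{k}U\otimes T)$ and project to the quotient of an arbitrary block $\mathcal{P}_{i}$: every generator that does not cover $\mathcal{P}_{i}$ restricts into $I_{\gamma_{i}}$ and dies, while the generators that do cover $\mathcal{P}_{i}$ resonate with \emph{distinct} cokernel forms (distinct $j$ give distinct $i_{0}=k+j-\alpha_{i}$), forcing their coefficients to vanish; since every covering generator covers some block, all coefficients vanish. Hence the images of the covering generators form a basis of the image of $p_{k-1}(\partial^{\lambda-k}(h))$ in the quotient, and $q(k)$ equals their number, as claimed.
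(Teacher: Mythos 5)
Your proposal is correct and follows essentially the same route as the paper: you use the same block decomposition read off from the list $(*)$, the same further decomposition along the diagonals of constant index difference, and your covering dichotomy and independence argument coincide with the paper's Steps 1 and 2. The only real difference is presentational: where the paper tests solvability of the primal chain systems via Lemma \ref{lemvettori} (whose type $iii)$ condition $\sum w_{i}=0$ is precisely the one-dimensional cokernel contribution you describe), you dualize and pair the generators against an explicit basis of the annihilator of each block image, with your ``resonance'' index playing the role of the paper's unique non-homogeneous diagonal subsystem.
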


Before proving Theorem \ref{teocover} we need the following:

\begin{lemma}
\label{lemvettori} Let $z_{1},z_{2},...,z_{q}$ complex variables ($q\geq 2$)
and let us consider a linear system of $q-1$ equations as follows:

$\varepsilon z_{1}-z_{2}=$ $w_{1}$

$z_{2}-z_{3}=w_{2}$

$z_{3}-z_{4}=w_{3}$

$.....$

$z_{q-1}-\eta z_{q}=w_{q-1}$

\noindent where $\varepsilon ,\eta \in \Bbb{C}$ and $\mathbf{w}%
:=(w_{1},w_{2},...,w_{q-1})\in \Bbb{C}^{q-1}.$ Then:

$i)$ if $\varepsilon \neq 0$ and $\eta \neq 0,$ the linear system has always
solutions, more precisely $\infty ^{1}$ solutions;

$ii)$ if $\varepsilon =0$ and $\eta \neq 0,$ or $\varepsilon \neq 0$ and $%
\eta =0,$ the linear system has always a unique solution, hence it has a non
zero solution if and only if $\mathbf{w}\neq \mathbf{0};$

$iii)$ if $\varepsilon =$ $\eta =0,$ the linear system has a solution if and
only if $\sum\limits_{i=1}^{q-1}w_{i}=0$, in this case it has a unique
solution; hence it has a non zero solution if and only if $\mathbf{w}\neq 
\mathbf{0}$ but $\sum\limits_{i=1}^{q-1}w_{i}=0.$
\end{lemma}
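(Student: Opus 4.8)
The plan is to reduce the $(q-1)\times q$ system to just two equations by using the $q-2$ pure difference equations to eliminate the interior unknowns. The equations $z_i-z_{i+1}=w_i$ for $2\le i\le q-2$ solve recursively as $z_j=z_2-(w_2+\cdots+w_{j-1})$ for every $j=2,\dots,q-1$; in particular $z_3,\dots,z_{q-1}$ are \emph{uniquely} determined by $z_2$. Substituting $z_{q-1}=z_2-(w_2+\cdots+w_{q-2})$ into the last equation, the whole system becomes equivalent to the two boundary relations $\varepsilon z_1-z_2=w_1$ and $z_2-\eta z_q=W$, where $W:=w_2+\cdots+w_{q-1}$, in the unknowns $z_1,z_2,z_q$. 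Since $z_2\mapsto(z_3,\dots,z_{q-1})$ is an affine bijection, the full solution set is in affine-linear bijection with the solution set of this reduced pair, so existence, the number of free parameters, and the vanishing of the solution can all be read off from the two relations.

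First I would settle case $i)$: when $\varepsilon\neq 0$ and $\eta\neq 0$ I take $z_q$ as a free parameter, then $z_2=W+\eta z_q$ from the second relation and $z_1=\varepsilon^{-1}(w_1+z_2)$ from the first; this yields a solution for every value of $z_q$ with no further constraint, giving exactly the claimed $\infty^{1}$ family. Next I would treat $ii)$ and $iii)$ by the same bookkeeping, keeping in mind, as in Remark \ref{rem(+)} and the list $(*)$, that a vanishing coefficient $\varepsilon=0$ (resp. $\eta=0$) means the monomial $z_1$ (resp. $z_q$) does not actually occur and is not counted as an unknown. If $\varepsilon=0,\ \eta\neq0$, the first relation forces $z_2=-w_1$ and the second forces $z_q=\eta^{-1}(z_2-W)$, both uniquely; symmetrically, if $\varepsilon\neq0,\ \eta=0$, one gets $z_2=W$ and $z_1=\varepsilon^{-1}(w_1+z_2)$. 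In either situation the reduced coefficient matrix, restricted to the unknowns that occur, is triangular with nonzero determinant ($\eta$ or $\varepsilon$ up to sign), so the solution is unique. Finally, if $\varepsilon=\eta=0$ the two relations read $z_2=-w_1$ and $z_2=W$, which are compatible precisely when $-w_1=W$, i.e. $\sum_{i=1}^{q-1}w_i=0$; under this condition $z_2$, and hence the entire solution, is uniquely determined.

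For the statements about nonzero solutions in $ii)$ and $iii)$, I would plug the unique solution back into the equations and note that it equals $\mathbf{0}$ exactly when every $w_i=0$: each $w_i$ is a difference (or a single entry) of the $z$'s, so $\mathbf{z}=\mathbf{0}$ forces $\mathbf{w}=\mathbf{0}$, while conversely the associated homogeneous system has only the trivial solution. This gives ``nonzero solution if and only if $\mathbf{w}\neq\mathbf{0}$'' in $ii)$, and the same computation carried out under the constraint $\sum w_i=0$ gives the corresponding statement in $iii)$.

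The step I expect to be the main obstacle is not the algebra, which is routine back-substitution, but the correct treatment of the degenerate variable: one must not count $z_1$ (or $z_q$) when its coefficient $\varepsilon$ (or $\eta$) vanishes, since otherwise the systems in $ii)$ and $iii)$ would spuriously acquire an extra free parameter and the uniqueness assertions would fail. Making this convention explicit, and justifying it by the application in which those boundary unknowns are genuinely absent, is the delicate point of the proof.
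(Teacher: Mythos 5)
Your proof is correct and follows essentially the same route as the paper: both arguments come down to elementary rank considerations for the bidiagonal coefficient matrix, with yours making the rank computation explicit by back-substitution through the interior unknowns. The convention you flag as delicate --- not counting $z_{1}$ (resp.\ $z_{q}$) as an unknown when $\varepsilon$ (resp.\ $\eta$) vanishes --- is exactly how the paper handles it, by taking the domains of the associated linear maps to be $\Bbb{C}^{q-1}$ in case $ii)$ and $\Bbb{C}^{q-2}$ in case $iii)$.
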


\begin{proof}
Let us consider the associated $(q-1,q)$ matrix:

\begin{center}
$\left[ 
\begin{array}{ccccccc}
\varepsilon & -1 & 0 & 0 & ... & 0 & 0 \\ 
0 & 1 & -1 & 0 & ... & 0 & 0 \\ 
0 & 0 & 1 & -1 & ... & 0 & 0 \\ 
... & ... & ... & ... & ... & ... & ... \\ 
0 & 0 & 0 & 0 & ... & 1 & -\eta
\end{array}
\right] .$
\end{center}

In case $i)$ we have a linear map $\Bbb{C}^{q}\rightarrow \Bbb{C}^{q-1}$
always of maximal rank. In case $ii)$ we have a linear map $\Bbb{C}
^{q-1}\rightarrow \Bbb{C}^{q-1}$ of maximal rank. In case $iii)$ we have a
linear map $\Bbb{C}^{q-2}\rightarrow \Bbb{C}^{q-1}$ whose associated matrix
has rank $q-2$ and it is easy to see that the linear system has a (unique)
solution if and only if $\sum\limits_{i=1}^{q-1}w_{i}=0.$
\end{proof}

\begin{proof}
(of Theorem \ref{teocover}) Step 1.

Let $\Delta $ be any generator of $p_{k-1}(\partial ^{\lambda -k}(h)).$ We
have that $\Delta \in D_{k}(S^{k}U\otimes T)$ if and only if there is a non
zero solution of the corresponding subset of $k$ equations (in the unknowns $%
a_{\nu ,i}$) extracted from the big set $(*).$ More precisely: the first
generator involves equations $1,..,k$ (counting from below), the second
generators involves equations $2,...,k+1,$ $....$ $,$ the last generator
involves equations $\lambda -k+1,...,\lambda .$ The $k$ equations for $%
\Delta $ can be indexed by $\mu =0,..,k-1$ as follows:

$E(\Delta ,0);$ $k\sum\limits_{i=0}^{k-1}\binom{k-1}{i}(a_{\nu +1,i}-a_{\nu
,i+1})x^{k-1-i}y^{i}=\rho _{\Delta }^{\prime }\frac{(d+1)!}{(d+k-2)!}\binom{%
k-1}{0}x^{k-1}$

$E(\Delta ,1);$ $k\sum\limits_{i=0}^{k-1}\binom{k-1}{i}(a_{\nu ,i}-a_{\nu
-1,i+1})x^{k-1-i}y^{i}=\rho _{\Delta }^{\prime }\frac{(d+1)!}{(d+k-2)!}%
\binom{k-1}{1}x^{k-2}y$

........

$E(\Delta ,\mu );$ $k\sum\limits_{i=0}^{k-1}\binom{k-1}{i}(a_{\nu +1-\mu
,i}-a_{\nu -\mu ,i+1})x^{k-1-i}y^{i}=\rho _{\Delta }^{\prime }\frac{(d+1)!}{%
(d+k-2)!}\binom{k-1}{\mu -1}x^{k-\mu }y^{\mu -1}$

........

$E(\Delta ,k-1);$ $k\sum\limits_{i=0}^{k-1}\binom{k-1}{i}(a_{\nu
+2-k,i}-a_{\nu +1-k,i+1})x^{k-1-i}y^{i}=\rho _{\Delta }^{\prime }\frac{(d+1)!%
}{(d+k-2)!}\binom{k-1}{k-1}y^{k-1}.$

Of course, for some $\nu ,$ the equations $E(\Delta ,\mu )$ can be different
by Remark \ref{rem(+)}. As $k$ and $d$ are fixed we can simplify a little
bit any equation by putting $\rho _{\Delta }=(\rho _{\Delta }^{\prime }\frac{
(d+1)!}{(d+k-2)!})(\frac{1}{k}).$ Note that $\rho _{\Delta }\neq 0$ and it
depends only on $\Delta .$

The set of $k$ equations corresponding to $\Delta $ can be subdivided into $%
t_{\Delta }$ linear systems $\mathcal{L}_{1},...,\mathcal{L}_{t_{\Delta }}$,
where $t_{\Delta }\geq 1$ is the number of the elements $\mathcal{P}$ of the
partition such that the intersection of $\mathcal{P}$ with the $k$
consecutive integers corresponding to $\Delta $ is not empty. Moreover
anyone of these linear systems can be further subdivided into smaller linear
subsystems by considering the variables $a_{\nu ,i}$ for which the 
\underline{difference} $|\nu -i|$ is constant. Note the following facts:

- anyone of these linear subsystems belongs to one of the types $i),$ $ii)$
or $iii)$ considered by Lemma \ref{lemvettori};

- anyone of these linear subsystems involves different variables;

- in any $\mathcal{L}_{j}$ $(j=1,...,t_{\Delta })$ there is one and only one
subsystem $\overline{\mathcal{L}_{j}}$ which is not homogeneous;

- to solve the linear system $\overline{\mathcal{L}_{j}}$ means to find the
fibre of a suitable linear map over a vector of type $(\rho _{\Delta },\rho
_{\Delta },...,\rho _{\Delta }).$

As the common factor $\rho _{\Delta }\neq 0$ depends only on $\Delta ,$ we
have that $\Delta \in D_{k}(S^{k}U\otimes T)$ if and only if \underline{ all}
non homogeneous linear systems $\overline{\mathcal{L}_{1}},...,\overline{%
\mathcal{L}_{t_{\Delta }}}$ have a non zero solution (if a system $\overline{%
\mathcal{L}_{j}}$ has only the zero solution then all of them have only the
zero solution). Viceversa $\Delta \notin D_{k}(S^{k}U\otimes T)$ if and only
if, among $\overline{\mathcal{L}_{1}},...,\overline{\mathcal{L}_{t_{\Delta }}%
},$ there exists at least one linear system having only the zero solution.
By Lemma \ref{lemvettori} this is possible if and only if there exists at
least a linear system $\overline{\mathcal{L}_{j}}$ of type $iii)$ and this
is true if and only if $\Delta $ covers at least an element $\mathcal{P}$ of
the partition.

\underline{Example}. To clearify the above argument, let us decribe an
example in which $k=4,$ $\lambda =$ $\gamma _{1}+\gamma _{2}+\gamma
_{3}=3+3+4=10$ and $\Delta $ covers the element $\mathcal{P}$ $%
\longleftrightarrow \gamma _{2}$ of the partition, with $t_{\Delta }=2.$ To
simplify notations, let us call the variables of $(*)$ with different
letters. We have the following diagram, where $g_{1},...,g_{\lambda }$ are
obtained multiplying the elements of the first four columns with the
monomials at the top and the rows containing $\rho _{\Delta }$ correspond to
the equations $E(\Delta ,0),...,E(\Delta ,3)$ in this case:

$
\begin{array}{ccccc}
x^{3} & 3x^{2}y & 3xy^{2} & y^{3} &  \\ 
-a_{1} & -a_{2} & -a_{3} & -a_{4} &  \\ 
a_{0}-b_{1} & a_{1}-b_{2} & a_{2}-b_{3} & a_{3}-b_{4} &  \\ 
b_{0}-c_{1} & b_{1}-c_{2} & b_{2}-c_{3} & b_{3}-c_{4} &  \\ 
c_{0} & c_{1} & c_{2} & c_{3} & \rho _{\Delta }x^{3} \\ 
-d_{1} & -d_{2} & -d_{3} & -d_{4} & \rho _{\Delta }3x^{2}y \\ 
d_{0}-e_{1} & d_{1}-e_{2} & d_{2}-e_{3} & d_{3}-e_{4} & \rho _{\Delta
}3xy^{2} \\ 
e_{0} & e_{1} & e_{2} & e_{3} & \rho _{\Delta }y^{3} \\ 
-p_{1} & -p_{2} & -p_{3} & -p_{4} &  \\ 
p_{0}-q_{1} & p_{1}-q_{2} & p_{2}-q_{3} & p_{3}-q_{4} &  \\ 
q_{0} & q_{1} & q_{2} & q_{3} & 
\end{array}
$

\noindent and $\Delta \in D_{3}(S^{3}U\otimes T)$ if and only if there exist
solutions, with $\rho _{\Delta }\neq 0,$ of the linear systems $\mathcal{L}%
_{1}$ and $\mathcal{L}_{2}$ implied by the following equations:

$\mathcal{L}_{1})$ $c_{0}x^{3}+c_{1}3x^{2}y+c_{2}3xy^{2}+c_{3}y^{3}=\rho
_{\Delta }x^{3}$

$\mathcal{L}_{2})-d_{1}x^{3}-d_{2}3x^{2}y-d_{3}3xy^{2}-d_{4}y^{3}=\rho
_{\Delta }3x^{2}y$

$\qquad
(d_{0}-e_{1})x^{3}+(d_{1}-e_{2})3x^{2}y+(d_{2}-e_{3})3xy^{2}+(d_{3}-e_{4})y^{3}=\rho _{\Delta }3xy^{2} 
$

$\qquad e_{0}x^{3}+e_{1}3x^{2}y+e_{2}3xy^{2}+e_{3}y^{3}=\rho _{\Delta
}y^{3}. $

By considering the linear subsystems for which $|\nu -i|$ is constant, (i.e.
those involving variables on the descending diagonals $\searrow $ of $%
\mathcal{L}_{1}\cup \mathcal{L}_{2},$ note that this is a partition of the
variables) we have that there exists only one non homogeneous subsystem $%
\overline{\mathcal{L}}_{2}\subset \mathcal{L}_{2}:$

$-d_{2}=\rho _{\Delta }$

$d_{2}-e_{3}=\rho _{\Delta }$

$e_{3}=\rho _{\Delta }$

\noindent implying $\rho _{\Delta }=0.$ Hence $\Delta \notin
D_{3}(S^{3}U\otimes T).$ Note that, if we change generator, for instance by
shifting the left column of the diagram to the top, this generator is not a
covering one and the corresponding $\mathcal{L}_{1}$ and $\mathcal{L}_{2}$
become:

$\mathcal{L}_{1})$ $%
(b_{0}-c_{1})x^{3}+(b_{1}-c_{2})3x^{2}y+(b_{2}-c_{3})3xy^{2}+(b_{3}-c_{4})y^{3}=\rho _{\Delta }x^{3}. 
$

$\qquad c_{0}x^{3}+c_{1}3x^{2}y+c_{2}3xy^{2}+c_{3}y^{3}=\rho _{\Delta
}3x^{2}y$

$\mathcal{L}_{2})-d_{1}x^{3}-d_{2}3x^{2}y-d_{3}3xy^{2}-d_{4}y^{3}=\rho
_{\Delta }3xy^{2}$

$\qquad
(d_{0}-e_{1})x^{3}+(d_{1}-e_{2})3x^{2}y+(d_{2}-e_{3})3xy^{2}+(d_{3}-e_{4})y^{3}=\rho _{\Delta }y^{3}. 
$

There are only the following non homogeneus subsystems $\overline{\mathcal{L}%
}_{i}\subset \mathcal{L}_{i}:$

$\overline{\mathcal{L}}_{1})$ $b_{0}-c_{1}=\rho _{\Delta }$

$\qquad c_{1}=\rho _{\Delta }$

$\overline{\mathcal{L}}_{2})$ $-d_{3}=\rho _{\Delta }$

$\qquad d_{3}-e_{4}=\rho _{\Delta }$

\noindent having solutions with $\rho _{\Delta }\neq 0,$ and all other
subsystems have solutions, hence the generator belongs to $%
D_{3}(S^{3}U\otimes T).$

Step 2. Let $\Delta _{1},...,\Delta _{q}$ be the set of covering generators
of $p_{k-1}(\partial ^{\lambda -k}(h)).$ To get the proof of Theorem \ref
{teocover} we have to show that $\{\Delta _{1},...,\Delta _{q}\}$ is a base
for $Q_{k},$ i.e. that no non zero element $\chi _{1}\Delta _{1}+...+\chi
_{q}\Delta _{q}$ ($\chi _{i}\in \Bbb{C}$) belongs to $D_{k}(S^{k}U\otimes T)$%
. The argument of Step 1 implies that this is obviously true if the
generators cover different $\mathcal{P}:$ assuming that $\chi _{1}\Delta
_{1}+...+\chi _{q}\Delta _{q}\in D_{k}(S^{k}U\otimes T),$ for any $i=1,...,q$
we would get a non homogeneous linear system implying $\chi _{i}=0$ and
these linear systems would involve different variables. But this is true
also when two (or more) generators cover the same $\mathcal{P}$ because the
sets of consecutive covered integers cannot coincide for distinct
generators, hence the variables involved by the non homogeneous subsystems
are always different. We think that an example is sufficient to explain why.

Let us consider the previous example and let us assume that there exists
another generator $\Delta ^{\prime }$ covering the same $\mathcal{P}%
\longleftrightarrow \gamma _{2}.$ There is only one possibility, described
by the following diagram, using the same notations as in the previous one:

$
\begin{array}{ccccc}
x^{3} & 3x^{2}y & 3xy^{2} & y^{3} &  \\ 
-a_{1} & -a_{2} & -a_{3} & -a_{4} &  \\ 
a_{0}-b_{1} & a_{1}-b_{2} & a_{2}-b_{3} & a_{3}-b_{4} &  \\ 
b_{0}-c_{1} & b_{1}-c_{2} & b_{2}-c_{3} & b_{3}-c_{4} &  \\ 
c_{0} & c_{1} & c_{2} & c_{3} & \rho _{\Delta }x^{3} \\ 
-d_{1} & -d_{2} & -d_{3} & -d_{4} & \rho _{\Delta }3x^{2}y+\rho _{\Delta
^{\prime }}x^{3} \\ 
d_{0}-e_{1} & d_{1}-e_{2} & d_{2}-e_{3} & d_{3}-e_{4} & \rho _{\Delta
}3xy^{2}+\rho _{\Delta ^{\prime }}3x^{2}y \\ 
e_{0} & e_{1} & e_{2} & e_{3} & \rho _{\Delta }y^{3}+\rho _{\Delta ^{\prime
}}3xy^{2} \\ 
-p_{1} & -p_{2} & -p_{3} & -p_{4} & \rho _{\Delta ^{\prime }}y^{3} \\ 
p_{0}-q_{1} & p_{1}-q_{2} & p_{2}-q_{3} & p_{3}-q_{4} &  \\ 
q_{0} & q_{1} & q_{2} & q_{3} & 
\end{array}
$

\noindent note that, as $k$ ad $d$ are fixed, we can use coefficients $\rho
_{\Delta }$ and $\rho _{\Delta ^{\prime }}$ as in Step 1 and $\chi _{\Delta
}=$ $0$ if and only if $\rho _{\Delta }$ $=0,$ $\chi _{\Delta ^{\prime }}=$ $%
0$ if and only if $\rho _{\Delta ^{\prime }}$ $=0.$

We get a set of five equations, three of them for $\mathcal{P}%
\longleftrightarrow \gamma _{2}:$

$-d_{1}x^{3}-d_{2}3x^{2}y-d_{3}3xy^{2}-d_{4}y^{3}=\rho _{\Delta
}3x^{2}y+\rho _{\Delta ^{\prime }}x^{3}$

$%
(d_{0}-e_{1})x^{3}+(d_{1}-e_{2})3x^{2}y+(d_{2}-e_{3})3xy^{2}+(d_{3}-e_{4})y^{3}=\rho _{\Delta }3xy^{2}+\rho _{\Delta ^{\prime }}3x^{2}y 
$

$e_{0}x^{3}+e_{1}3x^{2}y+e_{2}3xy^{2}+e_{3}y^{3}=\rho _{\Delta }y^{3}+\rho
_{\Delta ^{\prime }}3xy^{2}$

and we have:

$-d_{2}=\rho _{\Delta }$

$d_{2}-e_{3}=$ $\rho _{\Delta }$

$e_{3}=$ $\rho _{\Delta }$

implying $\rho _{\Delta }=0,$ as in Step 1, and

$-d_{1}=\rho _{\Delta ^{\prime }}$

$d_{1}-e_{2}=\rho _{\Delta ^{\prime }}$

$e_{2}=$ $\rho _{\Delta ^{\prime }}$

implying $\rho _{\Delta ^{\prime }}=0.$
\end{proof}

\section{A combinatorial formula}

As we have seen in the previous Sections to compute $\varphi (k)$ it
suffices to have a formula for calculating $q(k)$ for every special vector
space $T\subset S^{d}U$ and Theorem \ref{teocover} says that we have to
compute the number of covering generators among the generators of $%
p_{k-1}(\partial ^{\lambda -k}(h)).$ It is a combinatorial problem because:

- $T$ is determined by the partition $\gamma _{1}+\gamma _{2}+...+\gamma
_{r}=\lambda $ $(r\geq 2,\gamma _{i}\geq 2),$

- every generator of $p_{k-1}(\partial ^{\lambda -k}(h))$ $(k\in [2,\lambda
])$ involves a set of $k$ consecutive integers $[1,...,k],$ $%
[2,...,k+1],...,[\lambda -k+1,\lambda ],$ subsets of $[1,\lambda ],$

- we have to compute the number of such subsets containing at least an
element $\mathcal{P}$ of the partition.

Although this is a very simple problem, we have found nothing about it in
the literature.

Let us put the above combinatorial problem in the correct perspective. Let $%
\lambda $ be a positive integer, $\lambda \geq 2.$ Let $\gamma _{1}+\gamma
_{2}+...+\gamma _{r}=\lambda $ $(r\geq 1,\gamma _{i}\geq 1)$ be a partition
of $\lambda $ giving rise to (infinitely many) subsets of $\Bbb{Z}$ in the
following way: $[a_{1},b_{1}]\cup [a_{2},b_{2}]\cup ...\cup [a_{r},b_{r}]$
with $\gamma _{i}=$ $b_{i}-a_{i}+1$ for $i=1,...,r$ and $a_{i}=b_{i-1}+1$
for $i=2,...,r.$ We will indicate such a partition as $(\gamma _{1},\gamma
_{2},...,\gamma _{r}).$ Let us fix a positive integer $k\geq 1$ and for any $%
j\in \Bbb{Z}$ let us consider the subset $\Lambda _{j}(k):=[j+1,j+k]\subset 
\Bbb{Z}.$ We say that $\Lambda _{j}(k)$ is a \textit{covering set} for the
partition $(\gamma _{1},\gamma _{2},...,\gamma _{r})$ if there exists at
least an index $i=1,...,r$ such that $\Lambda _{j}(k)\supseteq
[a_{i},b_{i}]. $ Let us define the following functions:

$\widetilde{q}(k,\gamma _{1},\gamma _{2},...,\gamma _{r})=\sharp \{\Lambda
_{j}(k)$ $|$ $\Lambda _{j}(k)$ is a covering set for $(\gamma _{1},\gamma
_{2},...,\gamma _{r})\}$

$q(k,\gamma _{1},\gamma _{2},...,\gamma _{r})=\sharp \{\Lambda _{j}(k)$ $|$ $%
\Lambda _{j}(k)$ is a covering set for $(\gamma _{1},\gamma _{2},...,\gamma
_{r})$ and $\Lambda _{j}(k)\subseteq [a_{1},b_{r}]\}.$

Note that the above functions depend only from $k$ and the partition $%
(\gamma _{1},\gamma _{2},...,\gamma _{r}),$ but they do not depend from the
values of $a_{i}$ and $b_{i}.$

Obviously, for a special vector space $T\subset S^{k}U$ having height $%
\lambda $ and apex $h,$ and for which the partition of $\lambda $ is $%
(\gamma _{1},\gamma _{2},...,\gamma _{r}),$ we have that, for any $k\in
[2,\lambda ],$ $q(k)=q(k,\gamma _{1},\gamma _{2},...,\gamma _{r}).$ Hence we
solve our problem by giving a formula for calculating $q(k,\gamma
_{1},\gamma _{2},...,\gamma _{r}).$

\begin{lemma}
\label{lemqtilde} For the functions $\widetilde{q}(k,\gamma _{1},\gamma
_{2},...,\gamma _{r})$ we have:

$i)$ $\widetilde{q}(k,\gamma _{1})=[[k-\gamma _{1}+1]];$

$ii)$ for any $i=1,...,r-1$

$\widetilde{q}(k,\gamma _{1},...,\gamma _{i},\gamma _{i+1},...,\gamma _{r})=%
\widetilde{q}(k,\gamma _{1},...,\gamma _{i})+\widetilde{q}(k,\gamma
_{i+1},...,\gamma _{r})-\widetilde{q}(k,\gamma _{i}+\gamma _{i+1}).$
\end{lemma}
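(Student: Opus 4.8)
The plan is to work directly with the combinatorial definition of $\widetilde{q}$ as a count of covering sets $\Lambda_j(k) = [j+1, j+k]$. First I would establish part $i)$, the single-block case. When the partition has a unique block $[a_1, b_1]$ of length $\gamma_1$, a window $\Lambda_j(k)$ covers it precisely when $[a_1,b_1] \subseteq [j+1, j+k]$, i.e. when $j+1 \le a_1$ and $j+k \ge b_1$. Writing $a_1 = b_1 - \gamma_1 + 1$, these two inequalities become $b_1 - \gamma_1 - k + 1 \le j \le a_1 - 1 = b_1 - \gamma_1$. The number of integers $j$ in this range is $\max\{0,\, k - \gamma_1 + 1\} = [[k - \gamma_1 + 1]]$, which is exactly claim $i)$. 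This is a routine lattice-point count and should present no obstacle.

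\textbf{The inclusion–exclusion identity.} For part $ii)$, fix an index $i$ and split the partition at the join between block $i$ and block $i+1$. The key observation is that a window $\Lambda_j(k)$ is a covering set for the full partition $(\gamma_1,\dots,\gamma_r)$ if and only if it covers at least one block, so I would classify each covering window by whether the block it covers lies among the first $i$ blocks or among the last $r-i$ blocks. Let $A$ be the set of windows covering some block in $\{1,\dots,i\}$ (equivalently, covering sets for the sub-partition $(\gamma_1,\dots,\gamma_i)$) and let $B$ be the set of windows covering some block in $\{i+1,\dots,r\}$. Since every covering window for the whole partition covers a block on one side or the other, the covering count is $\sharp(A \cup B) = \sharp A + \sharp B - \sharp(A \cap B)$, and $\sharp A = \widetilde{q}(k,\gamma_1,\dots,\gamma_i)$, $\sharp B = \widetilde{q}(k,\gamma_{i+1},\dots,\gamma_r)$ by definition (recall these counts are translation-invariant, so using the sub-partitions in isolation is legitimate).

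\textbf{Identifying the overlap term.} The heart of the argument, and the step I expect to require the most care, is showing that $\sharp(A \cap B) = \widetilde{q}(k, \gamma_i + \gamma_{i+1})$. A window lies in $A \cap B$ exactly when it simultaneously contains some block $[a_{i'}, b_{i'}]$ with $i' \le i$ and some block $[a_{i''}, b_{i''}]$ with $i'' \ge i+1$. Because the blocks are arranged consecutively with $a_{m} = b_{m-1}+1$, an interval of the form $[j+1,j+k]$ that reaches across the join must, in particular, contain the two adjacent blocks $[a_i, b_i]$ and $[a_{i+1}, b_{i+1}]$ that straddle the cut — any window containing a block to the left of the cut and a block to the right of the cut contains the entire stretch between them, hence contains these two middle blocks, and conversely any window containing both of these contains a block on each side. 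Thus $A \cap B$ is precisely the set of windows covering the single merged interval $[a_i, b_{i+1}]$ of length $\gamma_i + \gamma_{i+1}$. By the translation-invariance of the count and part $i)$ applied to this merged block, $\sharp(A\cap B) = \widetilde{q}(k, \gamma_i + \gamma_{i+1})$. Combining the three pieces yields the stated identity $ii)$. The only subtlety to verify cleanly is the equivalence ``covers a block on each side $\iff$ covers the two adjacent straddling blocks,'' which follows from the contiguity of the blocks, but deserves an explicit line so that the reduction to a two-block merge is airtight.
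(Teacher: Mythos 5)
Your proof is correct and takes essentially the same approach as the paper: a direct lattice-point count for $i)$ and inclusion--exclusion for $ii)$, with the overlap $A\cap B$ identified as the set of windows containing the merged interval $[a_i,b_{i+1}]$ of length $\gamma_i+\gamma_{i+1}$ (you spell out the ``covers a block on each side $\iff$ contains both straddling blocks'' equivalence more explicitly than the paper does, which is a small improvement). The only blemish is a transcription slip in $i)$: the condition $j+k\geq b_1$ gives the lower bound $j\geq b_1-k$, not $j\geq b_1-\gamma_1-k+1$, and it is with the corrected range $[b_1-k,\,b_1-\gamma_1]$ that the count equals $[[k-\gamma_1+1]]$ as you state.
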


\begin{proof}
$i)$ $\Lambda _{j}(k)\supseteq [a_{1},b_{1}=a_{1}+\gamma _{1}-1]$ if and
only if $k\geq \gamma _{1}$, $j+1\leq a_{1}$ and $j+k\geq b_{1}$, i.e. $j\in
[b_{1}-k,a_{1}-1],$ hence, if $k\geq \gamma _{1},$ $\widetilde{q} (k,\gamma
_{1})=a_{1}-1-(b_{1}-k)+1=k-\gamma _{1}+1$. Then $\widetilde{q} (k,\gamma
_{1})=$ $[[k-\gamma _{1}+1]].$

$ii)$ $\Lambda _{j}(k)$ covers $(\gamma _{1},\gamma _{2},...,\gamma _{r})$
if and only if it covers $(\gamma _{1},...,\gamma _{i})$ or it covers $%
(\gamma _{i+1},...,\gamma _{r})$ or both. In this last case $\Lambda _{j}(k)$
necessarily covers the set $[a_{i},b_{i}]\cup
[a_{i+1},b_{i+1}]=[a_{i},b_{i+1}]$ with $b_{i+1}-a_{i}+1=\gamma _{i}+\gamma
_{i+1}$, hence to compute correctly $\widetilde{q}(k,\gamma _{1},...,\gamma
_{r})$ we have to subtract $\widetilde{q}(k,\gamma _{i}+\gamma _{i+1})$ from
the sum $\widetilde{q}(k,\gamma _{1},...,\gamma _{i})+\widetilde{q}(k,\gamma
_{i+1},...,\gamma _{r}).$
\end{proof}

\begin{lemma}
\label{leminduz} We have the following:

$\widetilde{q}(k,\gamma _{1},...,\gamma
_{r})=\sum\limits_{i=1}^{r}[[k-\gamma
_{i}+1]]-\sum\limits_{i=1}^{r-1}[[k-\gamma _{i+1}-\gamma _{i}+1]].$
\end{lemma}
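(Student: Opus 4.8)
The plan is to prove the formula by induction on the number $r$ of parts of the partition, using Lemma \ref{lemqtilde} as the only engine. The base case $r=1$ is immediate: the asserted identity collapses to $\widetilde{q}(k,\gamma_1)=[[k-\gamma_1+1]]$, since the second sum $\sum_{i=1}^{r-1}$ is empty when $r=1$; and this is exactly part $i)$ of Lemma \ref{lemqtilde}. So no separate computation is needed for the base.

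For the inductive step I would assume the formula holds for every partition with fewer than $r$ parts and apply part $ii)$ of Lemma \ref{lemqtilde} with the index $i=r-1$, which peels off the last block:
\[
\widetilde{q}(k,\gamma_1,\ldots,\gamma_r)=\widetilde{q}(k,\gamma_1,\ldots,\gamma_{r-1})+\widetilde{q}(k,\gamma_r)-\widetilde{q}(k,\gamma_{r-1}+\gamma_r).
\]
Then I substitute the inductive hypothesis
\[
\widetilde{q}(k,\gamma_1,\ldots,\gamma_{r-1})=\sum_{i=1}^{r-1}[[k-\gamma_i+1]]-\sum_{i=1}^{r-2}[[k-\gamma_{i+1}-\gamma_i+1]],
\]
and use part $i)$ of Lemma \ref{lemqtilde} to evaluate the two single-block terms as $\widetilde{q}(k,\gamma_r)=[[k-\gamma_r+1]]$ and $\widetilde{q}(k,\gamma_{r-1}+\gamma_r)=[[k-\gamma_{r-1}-\gamma_r+1]]$.

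The remaining work is purely bookkeeping. The extra term $[[k-\gamma_r+1]]$ supplies the missing $i=r$ summand, so it extends the first sum from $\sum_{i=1}^{r-1}$ to $\sum_{i=1}^{r}$; meanwhile the subtracted term $[[k-\gamma_{r-1}-\gamma_r+1]]$ is precisely the $i=r-1$ entry of the second sum, so it extends $\sum_{i=1}^{r-2}$ to $\sum_{i=1}^{r-1}$. After this recombination the right-hand side is exactly the claimed expression, which closes the induction. I expect no genuine obstacle here; the one point that deserves a line of care is checking that the correction term $\widetilde{q}(k,\gamma_{r-1}+\gamma_r)$ matches the general summand $[[k-\gamma_{i+1}-\gamma_i+1]]$ at $i=r-1$, i.e. that $\gamma_{r-1}+\gamma_r$ is indeed $\gamma_i+\gamma_{i+1}$ evaluated at $i=r-1$, so that the indices of the two telescoping sums line up correctly.
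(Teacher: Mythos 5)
Your proposal is correct and follows exactly the paper's own argument: induction on $r$, applying part $ii)$ of Lemma \ref{lemqtilde} with $i=r-1$ to peel off the last block, evaluating the single-block terms via part $i)$, and recombining the sums. Nothing to add.
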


\begin{proof}
We can do induction on $r\geq 1.$ If $r=1$ we use $i)$ of Lemma \ref
{lemqtilde}. Let us assume that the formula is true for $1,2,..,r-1$ and let
us prove it for $r.$

By $ii)$ of Lemma \ref{lemqtilde} we get:

$\widetilde{q}(k,\gamma _{1},...,\gamma _{r})=\widetilde{q}(k,\gamma
_{1},...,\gamma _{r-1})+\widetilde{q}(k,\gamma _{r})-\widetilde{q}(k,\gamma
_{r-1}+\gamma _{r}).$

By $i)$ of Lemma \ref{lemqtilde} we get:

$\widetilde{q}(k,\gamma _{1},...,\gamma _{r})=\widetilde{q}(k,\gamma
_{1},...,\gamma _{r-1})+[[k-\gamma _{r}+1]]-[[k-\gamma _{r-1}-\gamma
_{r}+1]].$

By induction:

$\widetilde{q}(k,\gamma _{1},...,\gamma _{r})=$

$=\sum\limits_{i=1}^{r-1}[[k-\gamma
_{i}+1]]-\sum\limits_{i=1}^{r-2}[[k-\gamma _{i+1}-\gamma _{i}+1]]+[[k-\gamma
_{r}+1]]-[[k-\gamma _{r-1}-\gamma _{r}+1]],$

\noindent i.e. our Lemma.
\end{proof}

Now we can prove the following:

\begin{proposition}
\label{propq} For any partition $(\gamma _{1},\gamma _{2},...,\gamma _{r})$
and for any positive integer $k$ we have:

$i)$ $r=1;$ $q(k,\gamma _{1})=1$ if $k=\gamma _{1}$, $q(k,\gamma _{1})=0$
otherwise.

$ii)$ $r\geq 2;$ $q(k,\gamma _{1},\gamma _{2},...,\gamma _{r})=\widetilde{q}
(k,\gamma _{1},...,\gamma _{r})-[[k-\gamma _{1}]]-[[k-\gamma _{r}]].$
\end{proposition}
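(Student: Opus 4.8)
The plan is to leverage the combinatorial characterization already built in Lemma \ref{leminduz}, which gives a closed form for $\widetilde{q}(k,\gamma_1,\dots,\gamma_r)$, and then account for the difference between $\widetilde{q}$ and $q$. Recall that $\widetilde{q}$ counts \emph{all} covering sets $\Lambda_j(k)$ (with $j$ ranging over all of $\Bbb{Z}$), whereas $q$ counts only those covering sets that additionally satisfy $\Lambda_j(k)\subseteq[a_1,b_r]$. So the natural strategy is to identify exactly which covering sets are counted by $\widetilde{q}$ but excluded from $q$, namely those that cover some block $[a_i,b_i]$ yet stick out past the left end $a_1$ or the right end $b_r$ of the total interval. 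The main work is to count these "overhanging" covering sets and show their number equals $[[k-\gamma_1]]+[[k-\gamma_r]]$.

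\medskip

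For part $i)$ with $r=1$: here $[a_1,b_r]=[a_1,b_1]$ has length exactly $\gamma_1$, and a covering set must contain $[a_1,b_1]$ (length $\gamma_1$) while being contained in $[a_1,b_1]$ (also length $\gamma_1$). This forces $\Lambda_j(k)=[a_1,b_1]$ exactly, which is possible precisely when $k=\gamma_1$, giving the unique covering set and hence $q(k,\gamma_1)=1$; for $k\neq\gamma_1$ no such set exists, so $q=0$. This case is essentially immediate from the definitions.

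\medskip

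For part $ii)$ with $r\geq 2$, I would argue as follows. A covering set $\Lambda_j(k)$ counted by $\widetilde{q}$ but not by $q$ is one that covers at least one block yet fails $\Lambda_j(k)\subseteq[a_1,b_r]$, i.e.\ it overhangs on the left ($j+1<a_1$) or on the right ($j+k>b_r$). The key geometric observation is that, since $r\geq 2$ and $k\leq\lambda=\sum\gamma_i$, a single $\Lambda_j(k)$ of length $k$ cannot simultaneously overhang both ends while still lying inside the range where it can cover a block — so the left-overhang and right-overhang families are disjoint, and I can count each separately. A covering set overhanging the left end must cover the first block $[a_1,b_1]$ (it cannot reach any later block without its left end falling inside $[a_1,b_r]$), so by the same interval-counting as in Lemma \ref{lemqtilde}$(i)$, the number of such sets with $j+1<a_1$ is $[[k-\gamma_1]]$ (one fewer than the total $[[k-\gamma_1+1]]$ covering $[a_1,b_1]$, since exactly one of those, the one with $j+1=a_1$, does \emph{not} overhang). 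Symmetrically, the right-overhanging family covering $[a_r,b_r]$ has cardinality $[[k-\gamma_r]]$. Subtracting these two disjoint counts from $\widetilde{q}$ yields $q=\widetilde{q}-[[k-\gamma_1]]-[[k-\gamma_r]]$.

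\medskip

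\textbf{The main obstacle} I anticipate is the disjointness claim and the careful boundary bookkeeping: I must verify that no covering set is double-counted (overhanging both ends) and that a left-overhanging covering set is forced to cover the \emph{first} block rather than an interior one — this uses both $r\geq 2$ and the constraint $k\leq\lambda$, and is where an off-by-one error in the $[[\,\cdot\,]]$ counts is most likely to creep in. Once the "overhang sets cover only the extreme block" reduction is established, the actual counting is just the half-line computation already performed in Lemma \ref{lemqtilde}$(i)$, so the proof should then close quickly.
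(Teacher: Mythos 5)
Your proof is correct and takes essentially the same route as the paper's: both identify the covering sets counted by $\widetilde{q}(k,\gamma_1,\dots,\gamma_r)$ but excluded from $q(k,\gamma_1,\dots,\gamma_r)$ as those overhanging an end of $[a_1,b_r]$, observe that a left- (resp.\ right-) overhanging covering set necessarily contains $[a_1,b_1]$ (resp.\ $[a_r,b_r]$), and count the two disjoint families as $[[k-\gamma_1]]$ and $[[k-\gamma_r]]$, the paper merely organizing this as a case split on whether $k\ge\gamma_1$ and whether $k\ge\gamma_r$. Your explicit restriction to $k\le\lambda$ when asserting disjointness of the two overhang families is in fact necessary: the stated identity fails for $k\ge\lambda+2$ (for $r=2$ and $k=\gamma_1+\gamma_2+2$ one computes $\widetilde{q}-[[k-\gamma_1]]-[[k-\gamma_2]]=-1$ while $q=0$), a point the paper glosses over with its ``no interference'' remark, but this is harmless since only $k\in[2,\lambda]$ is ever used.
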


\begin{proof}
If $r=1$ the proof of $i)$ is immediate. Let us assume $r\geq 2.$ If $%
k<\gamma _{i}$ for any $i=1,..,r$ then $q(k,\gamma _{1},\gamma
_{2},...,\gamma _{r})=\widetilde{q}(k,\gamma _{1},...,\gamma _{r})=0$ and $%
ii)$ is proved. Then we can assume that there exists at least an index $i$
such that $k\geq \gamma _{i}$.

If $k<\gamma _{1}$ and $k<\gamma _{r}$ it is easy to see that $q(k,\gamma
_{1},\gamma _{2},...,\gamma _{r})=\widetilde{q}(k,\gamma _{1},...,\gamma
_{r})$ and $ii)$ is proved.

If $k\geq \gamma _{1}$ and $k<\gamma _{r}$ we have that the covering set $%
\Lambda _{j}(k)$ for $(\gamma _{1},\gamma _{2},...,\gamma _{r})$ having the
lower $j$ is $[a_{1}+\gamma _{1}-k,a_{1}+\gamma _{1}-1=b_{1}]$ and the
covering sets $\Lambda _{j}(k)$ with $a_{1}+\gamma _{1}-k-1\leq j\leq
a_{1}-2 $ (which are in number of $k-\gamma _{1}$) are the only ones that we
have to consider in calculating $\widetilde{q}(k,\gamma _{1},...,\gamma
_{r}) $ but we have not to consider in calculating $q(k,\gamma _{1},\gamma
_{2},...,\gamma _{r}),$ and $ii)$ is proved in this case too.

If $k<\gamma _{1}$ and $k\geq \gamma _{r}$ we can argue as in the previous
case by considering the higher $j$ for which $\Lambda _{j}(k)$ is a covering
set for $(\gamma _{1},\gamma _{2},...,\gamma _{r})$ and $ii)$ holds.

If $k\geq \gamma _{1}$ and $k\geq \gamma _{r}$ we can compute $q(k,\gamma
_{1},\gamma _{2},...,\gamma _{r})-\widetilde{q}(k,\gamma _{1},...,\gamma
_{r})$ by arguing as in the previous two cases, separately for the left and
the right side of $[a_{1},b_{r}]$. As $r\geq 2,$ there is no interference
among the two sides.

In conclusion $ii)$ holds in any case.
\end{proof}

Now we can compute the functions $\varphi (k)$ and $\Delta ^{2}\varphi (k)$
for any special vector space $T.$

\begin{theorem}
\label{teocalcolo} Let $T\subset S^{d}U$ be a special vector space of
dimension $e+1$ and of type $T=\partial ^{b_{1}}(m_{1})\oplus \partial
^{b_{2}}(m_{2})\oplus ...\oplus \partial ^{b_{r}}(m_{r}),$ where every
polynomial $m_{i}$ is a monomial. Then, for any $k\in [2,\lambda :=e+1+r]$
where $\lambda $ is the heigth of $T,$ we have:

$\varphi (k)=\sum\limits_{i=1}^{r}(b_{i}-k+1)+\lambda
-k+1+\sum%
\limits_{i=1}^{r-1}[[k-b_{i+1}-b_{i}-3]]+[[k-b_{1}-2]]+[[k-b_{r}-2]] $

$\Delta ^{2}\varphi (k)=\sum\limits_{i=1}^{r-1}\{1,$ if $k=b_{i}+b_{i+1}+2,0$
otherwise$\}+\{1,$ if $k=b_{1}+1\}+\{1,$ if $k=b_{r}+1\}.$
\end{theorem}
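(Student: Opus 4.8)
The plan is to obtain both formulas by direct substitution into the value of $\varphi(k)$ supplied by Proposition \ref{propTspecial}$(iii)$, namely $\varphi(k)=\dim\partial^{-k}T+\dim\partial^{\lambda-k}(h)-q(k)$ for $k\in[2,\lambda]$, followed by an elementary simplification. First I would pass to the $b_i$-notation of the statement. Since $T=\partial^{b_1}(m_1)\oplus\cdots\oplus\partial^{b_r}(m_r)$ is special, comparison with Definition \ref{defTspeciale} gives $\gamma_i=b_i+2$, whence $\lambda=\sum_i\gamma_i=\sum_i b_i+2r=(e+1)+r$, consistent with the stated height. By Proposition \ref{propTspecial}$(iii)$ (via Lemma \ref{lemmonomi}) this turns $\dim\partial^{-k}T=\sum_{i=1}^r[[\gamma_i-1-k]]$ into $\sum_{i=1}^r[[b_i+1-k]]$, while $\dim\partial^{\lambda-k}(h)=[[\lambda-k+1]]=\lambda-k+1$ because $k\le\lambda$.

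Next I would insert the combinatorial value of $q(k)=q(k,\gamma_1,\dots,\gamma_r)$. As $r\ge2$, Proposition \ref{propq}$(ii)$ combined with Lemma \ref{leminduz} gives, after replacing each $\gamma_i$ by $b_i+2$,
\[
q(k)=\sum_{i=1}^r[[k-b_i-1]]-\sum_{i=1}^{r-1}[[k-b_{i+1}-b_i-3]]-[[k-b_1-2]]-[[k-b_r-2]].
\]
Substituting this and the two dimension formulas into $\varphi(k)$, the pieces $\sum_i[[b_i+1-k]]$ and $-\sum_i[[k-b_i-1]]$ combine. The single elementary fact needed here is the identity $[[z]]-[[-z]]=z$, valid for every $z\in\Bbb{Z}$ (immediate from $\max\{0,z\}-\max\{0,-z\}=z$). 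Applying it with $z=b_i+1-k$ collapses $\sum_i[[b_i+1-k]]-\sum_i[[k-b_i-1]]$ into $\sum_i(b_i-k+1)$, and collecting the surviving $[[\cdot]]$-terms yields exactly the claimed formula for $\varphi(k)$.

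For the second difference I would apply $\Delta^2$ termwise to this expression. The affine-in-$k$ part $\sum_i(b_i-k+1)+(\lambda-k+1)$ is annihilated by $\Delta^2$, so only the three groups of bracket terms contribute. The key auxiliary computation is that, for a constant $c$, the function $k\mapsto[[k-c]]=\max\{0,k-c\}$ has first difference equal to the step $0,\dots,0,1,1,\dots$ jumping at $k=c$, so that $\Delta^2[[k-c]]$ equals $1$ when $k=c-1$ and $0$ otherwise. Applying this to $[[k-b_{i+1}-b_i-3]]$ (contribution at $k=b_i+b_{i+1}+2$), to $[[k-b_1-2]]$ (contribution at $k=b_1+1$) and to $[[k-b_r-2]]$ (contribution at $k=b_r+1$) reads off precisely the three indicator terms in the stated formula for $\Delta^2\varphi(k)$.

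I expect no genuine obstacle: the whole argument is a substitution followed by the two elementary observations $[[z]]-[[-z]]=z$ and the fact that $\Delta^2[[k-c]]$ is the indicator of $k=c-1$. The only point requiring care is the bookkeeping—keeping the translation $\gamma_i=b_i+2$ consistent throughout, respecting the range $k\in[2,\lambda]$ (so that $[[\lambda-k+1]]$ simplifies), and making sure the double-bracket conventions are preserved when the dimension formula of Lemma \ref{lemmonomi} and the combinatorial formula of Proposition \ref{propq} are merged.
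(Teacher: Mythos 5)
Your proposal is correct and follows essentially the same route as the paper's own proof: substitute the dimension formulas and the combinatorial value of $q(k)$ (via Lemma \ref{leminduz} and Proposition \ref{propq}) into $\varphi(k)=\dim\partial^{-k}T+\dim\partial^{-k+1}\partial T-\dim(Q_{k})$, simplify with $[[z]]-[[-z]]=z$, and then apply $\Delta ^{2}$ using the fact that $\Delta ^{2}[[k-c]]$ is the indicator of $k=c-1$. The bookkeeping $\gamma _{i}=b_{i}+2$ and the resulting jump locations all match the paper.
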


\begin{proof}
By Corollary \ref{corformula} we have that $\varphi (k)=\dim \partial
^{-k}T+\dim \partial ^{-k+1}\partial T-\dim (Q_{k})$.

By Proposition \ref{propmonomi} $iv)$ we have that $\dim (\partial
^{-k}T)=\sum\limits_{i=1}^{r}[[b_{i}-k+1]].$

As $T$ is special, $\partial T=\partial ^{\lambda -1}(h),$ where $h$ is a
suitable monomial, apex of $T,$ and $\lambda
=\sum\limits_{i=1}^{r}(b_{i}+2)=e+1+r$ is the height of $T.$ Hence, by Lemma 
\ref{lemmonomi}, $\dim $ $\partial ^{-k+1}\partial T=\dim $ $\partial
^{\lambda -k}(h)=[[\lambda -k+1]]=\lambda -k+1$ as $k\leq \lambda .$
Moreover, as $T$ is special, $r\geq 2$ and the associated partition of $%
\lambda $ is $(\gamma _{1},\gamma _{2},...,\gamma
_{r})=(b_{1}+2,b_{2}+2,...,b_{r}+2)$; hence $\dim (Q_{k})=q(k)$ $%
=\sum\limits_{i=1}^{r}[[k-b_{i}-1]]-\sum%
\limits_{i=1}^{r-1}[[k-b_{i+1}-b_{i}-3]]-[[k-b_{1}-2]]-[[k-b_{r}-2]],$ by
Lemma \ref{leminduz} and Proposition \ref{propq}.

Recalling that, for any $z\in \Bbb{Z},$ we have $[[z]]-[[-z]]=z$ we get our
first formula.

As far concerning the second formula we have that $\Delta ^{2}$ is a linear
operator and $\Delta ^{2}\phi (k)=0$ for any linear function $\phi (k).$
Moreover $\Delta ^{2}([[k+z]])=1$ if $k=-z-1$ and $\Delta ^{2}([[k+z]])=0$
otherwise, for any fixed integer $z.$ Hence the second formula follows.
\end{proof}

\begin{remark}
\label{remcompatto} The second formula of Theorem \ref{teocalcolo} can be
written in a more compact way by adding two others integers independent from 
$T:$ i.e. $b_{0}=b_{r+1}=-1.$ In this case

$\Delta ^{2}\varphi (k)=\sum\limits_{i=0}^{r}\{1,$ if $k=b_{i}+b_{i+1}+2,0$
otherwise$\}.$
\end{remark}

\begin{remark}
\label{remcalcoloc} In the assumptions of Theorem \ref{teocalcolo}, let us
suppose that $\lambda =d-2,$ so that there are no integers $c_{i}$ such that 
$c_{i}=$ $0$ by Proposition \ref{propTspecial} $iv),$ then $d=\lambda
+2=e+r+3$ and $s-1=d-e-2=r+1.$ In this case we have $c_{i}=b_{i}+b_{i+1}+2$
for any $i=0,...,r$ and we can determine the splitting type of $\mathcal{N}%
_{f}.$ If $d=\lambda +2+x,$ with $x\geq 1,$ we have to add $x$ zero integers
to the set of $\{c_{i}\}.$ So that we can compute the splitting type of $%
\mathcal{N}_{f}$ for any special vector space $T.$
\end{remark}

The above Remark \ref{remcalcoloc} joint with Proposition \ref{propspezzo}
and Proposition 15 of \cite{ar2} allows to compute the splitting type of any
monomial rational curve $f$ according to our definition in \S\ 2. It is
natural to ask whether, in this way, we can give examples of every possible
splitting type for $\mathcal{N}_{f}.$ Unfortunately the answer is not. Let
us give an example.

Let us choose $d=12,$ $e=6,$ $s=5$, then $rank(\mathcal{N}_{f}$ $)=4$ and
the splitting type of $\mathcal{N}_{f}$ is given by four integers $c_{i}\geq
0$ such that $c_{1}+c_{2}+c_{3}+c_{4}=2(e+1)=14$ (see \S\ 2). For instance a
possibility is $(6,4,2,2)$. However this type cannot be obtained by a
monomial curve $C.$ Let $T\subset S^{12}U$ be the proper vector subspace
giving $C.$ By Proposition \ref{propspezzo} and Proposition 15 of \cite{ar2}
it is immediate to see that for any non special $T$ the splitting type of $%
\mathcal{N}_{f}$ cannot be $(6,4,2,2).$ Moreover, if $T$ is special then
necessarily $r=3$ and, by the formula given by Remark \ref{remcalcoloc}, we
have that the the splitting type of $\mathcal{N}_{f},$ for any special $T,$
is given by four positive integers such that there exists at least an
ordering $(x,y,z,w)$ with $x+z=y+w.$ This is not true for $(6,4,2,2).$


\begin{thebibliography}{A-R1}
\bibitem[A-R1]{ar1}  A.Alzati-R.Re: ''PGL(2) actions on Grassmannians and
projective construction of rational curves with given restricted tangent
bundle'' J. Pure Appl. Algebra \textbf{219} (2015) pp.1320-1335.

\bibitem[A-R2]{ar2}  A.Alzati-R.Re: ''Irreducible components of Hilbert
schemes of rational curves with given normal bundle'', arXiv:1502.02521 [math.AG].
\end{thebibliography}
\end{document}